\documentclass[10pt,reqno,oneside]{amsart}

\usepackage[a4paper,margin=1.1in]{geometry}
\usepackage{amsmath,amssymb,amsthm,mathtools,mathrsfs}
\usepackage{thmtools}
\usepackage{enumitem}
\usepackage{hyperref}
\usepackage[nameinlink]{cleveref}
\usepackage{microtype}
\usepackage[dvipsnames]{xcolor}
\usepackage{comment}
\usepackage{cases}

\usepackage{tikz}
\usepackage{pgfplots}
\pgfplotsset{compat=1.18}
\usetikzlibrary{arrows.meta}

\definecolor{ValidColor}{HTML}{184E77}
\definecolor{InvalidColor}{HTML}{E6E8EB}
\definecolor{BoundaryColor}{HTML}{184E77}
\definecolor{RefColor}{HTML}{184E77}
\definecolor{AxisColor}{HTML}{24292F}
\definecolor{LegBorderColor}{HTML}{D7DBE0}
\tikzset{myarrow/.tip={Latex[scale=1.2, width=1.5mm]}}

\makeatletter
\renewcommand{\subsection}{\@startsection{subsection}{2}%
  \z@{\linespacing\@plus.7\linespacing}{.5\linespacing}%
  {\normalfont\scshape}}
\makeatother

\hypersetup{colorlinks=true,linkcolor=blue,citecolor=blue,urlcolor=blue}

\newcommand{\R}{\mathbb{R}}

\newcommand{\N}{\mathbb{N}}
\newcommand{\dd}{\,\mathrm{d}}

\newcommand{\supp}{\operatorname{supp}}
\newcommand{\loc}{\operatorname{loc}}
\newcommand{\eps}{\varepsilon}
\newcommand{\abs}[1]{\left|#1\right|}

\newcommand{\Lop}{\mathcal L}

\newcommand{\pc}{p_{*}}

\newtheorem{theorem}{Theorem}
\newtheorem{proposition}[theorem]{Proposition}
\newtheorem{lemma}[theorem]{Lemma}
\newtheorem{corollary}[theorem]{Corollary}
\newtheorem{remark}[theorem]{Remark}
\newtheorem{definition}[theorem]{Definition}

\title[]{Blow-up for a semilinear Tricomi equation in the oscillatory regime at the critical Strauss-type exponent}

\author{Diego Marcon, Wanderley N. Nascimento, \and Matheus C. Santos}

\thanks{%
Instituto de Matemática e Estatística, Universidade Federal do Rio Grande do Sul, Porto Alegre, Brazil
}

\date{}

\begin{document}

\begin{abstract}
We study finite-time blow-up for a semilinear shifted Tricomi equation with decreasing propagation speed and an oscillatory scale-invariant mass. We focus on the Strauss-type critical regime and prove that every weak solution with finite speed of propagation, arising from nonnegative nontrivial energy data satisfying a suitable localization condition, blows up in finite time. For sufficiently small initial data, we also obtain the corresponding critical exponential upper bound for the lifespan. The proof relies on a positive self-similar solution of the homogeneous adjoint equation represented by the Gauss hypergeometric function. Combined with a previously established lower bound for the nonlinear term and new estimates adapted to the critical case, this construction reduces the PDE problem to a nonlinear differential inequality. An ODE comparison argument then yields both finite-time blow-up and the lifespan estimate.

\bigskip

\noindent{\it 2020 Mathematics Subject Classification:} 35B44, 35L71, 35B33, 35L15

\noindent\textit{Keywords:} Blow-up, critical exponent, lifespan, shifted Tricomi equation, scale-invariant mass, hypergeometric test function.
\end{abstract}

\maketitle

\section{Introduction}\label{sec:introduction}
In this paper we study the finite-time blow-up of solutions to the Cauchy
problem
\begin{equation}\label{eq:intro-main}
 \begin{cases}
  u_{tt}-(1+t)^{2\ell}\Delta u+\dfrac{\mu^{2}}{(1+t)^{2}}\,u=\abs{u}^{\pc},
  &(t,x)\in(0,T)\times\R^{n},\\
  u(0,x)=\eps f(x),\qquad u_{t}(0,x)=\eps g(x),
  &x\in\R^{n},
 \end{cases}
\end{equation}
where $\eps>0$, the initial data $f,g$ are nonnegative
and compactly supported, and the parameters of the equation satisfy
\[
 n\in\N,\qquad
 -1<\ell\leq0,\qquad
 \mu^{2}>\tfrac14,\qquad
 (1+\ell)n>1 .
\]
The power $\pc$ is taken to be the Strauss-type exponent
$\pc=\pc(n,\ell)>1$, defined as the unique positive root of the
quadratic polynomial
\begin{equation}\label{eq:intro-critical-polynomial}
\gamma(n,\ell;p)
:=
\Bigl((1+\ell)n-1\Bigr)p^{2}
-\Bigl((1+\ell)n-2\ell+1\Bigr)p
-2(1+\ell),
\end{equation}
The condition $-1<\ell\leq0$ implies that propagation speed
$a(t)=(1+t)^\ell$ is constant for $\ell=0$, and is decreasing and tends to zero for $\ell<0$. The term $\mu^2(1+t)^{-2}u$ is a scale-invariant time-dependent mass term, and the assumption $\mu^2>\frac14$ corresponds to the oscillatory regime of the associated linear equation. Finally, $(1+\ell)n>1$ is the condition ensuring that the critical exponent defined above satisfies $\pc(n,\ell)>1$, and in particular, it implies $n\geq2$. The one-dimensional case $n=1$ was considered in \cite{MarconNascimentoSantos2026}, where finite-time blow-up of solutions was established for every $p>1$. \

Problem \eqref{eq:intro-main} can be viewed as a particular instance of the
semilinear Cauchy problem
\begin{equation}\label{eq:intro-general}
\begin{cases}
 \Lop u=\abs{u}^{p},\\
 (u,u_t)\big|_{t=0}=\eps(f,g),
\end{cases}
\end{equation}
where $\Lop$ is a linear differential operator of second order with respect
to the time variable, whose coefficients may depend on $t$. We denote by
\[
 \overline{T}
 :=
 \sup\bigl\{
 T>0:\text{ a solution to \eqref{eq:intro-general} exists on }[0,T)
 \bigr\}
\]
the lifespan of the corresponding solution.

Over the last several decades, a substantial literature has been devoted to
problems of the form \eqref{eq:intro-general} for a wide variety of choices
of the operator $\Lop$. These include, among others, wave and Klein-Gordon
operators, equations with time-dependent propagation speed, damping or mass
terms, structurally damped and fractional evolution equations, as well as
models involving memory or other nonlocal effects. For broad classes of such equations, a recurring feature is the emergence of a critical-power phenomenon separating finite-time blow-up from small-data global existence. The location of this threshold is determined by the interplay between the long-time propagation and decay properties of the associated linear evolution, the influence of lower-order terms, and the class of initial data under consideration. Accordingly, in many models one can identify a critical exponent $p_{\mathrm c}>1$ whose value may depend on the operator $\Lop$ and the space dimension $n$, as well as on the functional framework and on assumptions concerning the initial data, such as their integrability, spatial decay, or compact support. Whenever such a sharp threshold is known, the following behavior is frequently observed:
\begin{itemize}
\item in the subcritical range $1<p<p_{\mathrm c}$, nontrivial solutions
corresponding to suitable, often nonnegative, initial data blow up in
finite time. In this regime, upper bounds for the lifespan are commonly of
polynomial type in $\eps^{-1}$;

\item at the critical exponent $p=p_{\mathrm c}$, finite-time blow-up persists in
many important models. The transition from the subcritical regime is then
often reflected in a change in the lifespan scale, with upper bounds of
exponential type in a negative power of $\eps$;

\item in the supercritical range $p>p_{\mathrm c}$, global-in-time solutions can
often be constructed for sufficiently small initial data in appropriate
function spaces.
\end{itemize}

Two classical examples for this phenomenon are the free and the dissipative wave equations. For
\[
 u_{tt}-\Delta u=\abs{u}^{p},
\]
the critical exponent is the Strauss exponent $p_{\mathrm S}(n)$, namely the positive root of the polynomial
\[ \gamma_{\mathrm S}(n;p):=(n-1)p^{2}-(n+1)p-2.\]
The corresponding blow-up and small initial data global-existence theory was developed in \cite{GeorgievLindbladSogge1997,Glassey1981a,John1979,  Schaeffer1985, Sideris1984,YordanovZhang2006,Zhou2007},
while sharp critical lifespan estimates were obtained in \cite{TakamuraWakasa2011, ZhouHan2014}. On the other hand, for the standard damped wave equation, 
\[ u_{tt}-\Delta u+u_t=\abs{u}^{p},\]
the dissipation is strong enough to make the large-time dynamics asymptotically parabolic. Therefore, the threshold occurs at a Fujita critical power $p_{\mathrm F}(n)=1+2/n$ \cite{TodorovaYordanov2001, Zhang2001}. These two mechanisms provide the basic wave-like and heat-like reference regimes for semilinear equations with time-dependent coefficients.

For the present problem, a more closely related class is given by equations
with time-dependent propagation speed, such as the generalized Tricomi
equation (written in the time shifted form):
 \[u_{tt}-(1+t)^{m}\Delta u=\abs{u}^{p},
 \qquad m>0.\]
Its propagation geometry gives rise to the modified Strauss-type critical exponent $p_{\mathrm{ST}}(n,m)$, defined as the positive root of the polinomial
\[\gamma_{\mathrm{ST}}(n,m;p)
 :=
 \gamma_{\mathrm S}\left(\frac{(m+2)n}{2};p\right)
 +m(p-1).\]
Or, formally $\gamma_{\mathrm{ST}}(n,m;p) =
 \gamma_{\mathrm S}\left(
 \frac{(m+2)n}{2}+\frac{m}{p};p
 \right).$ The linear theory and the early global-existence results were developed by  \cite{Yagdjian2004, Yagdjian2006,  Yagdjian2007b,Yagdjian2007a}, while the corresponding blow-up/global-existence threshold was subsequently completed in the series of works
\cite{HeWittYin2016II,HeWittYin2017,HeWittYin2017a,
HeWittYin2018, HeWittYin2020}. Lifespan estimates in both the subcritical and the critical ranges were obtained in \cite{LinTu2019}.

The polynomial \eqref{eq:intro-critical-polynomial} is closely related
to the polynomials $\gamma_{\mathrm S}$ and
$\gamma_{\mathrm{ST}}$ through
\[
 \gamma(n,\ell;p) =\gamma_{\mathrm{ST}}(n,2\ell;p) = \gamma_{\mathrm S}\left((1+\ell)n+\frac{2\ell}{p};p\right),
\]
so that $\gamma(n,0;p)=\gamma_{\mathrm S}(n;p)$ and the classical Strauss relation is recovered when $\ell=0$. Moreover, the identification $m=2\ell$ yields that the polynomial $\gamma(n,\ell;p)$ provides an algebraic continuation of the Tricomi critical polynomial from the usual range $m>0$ to $-2<m\leq0$, corresponding to the decreasing propagation speeds $a(t)=(1+t)^\ell$ considered in \eqref{eq:intro-main}.

The other essential feature of \eqref{eq:intro-main} is the
scale-invariant mass term. Together with the time-dependent propagation
speed, this connects the problem with the regular semilinear
Euler-Poisson-Darboux-Tricomi family
\begin{equation}\label{eq:intro-scale-invariant}
 u_{tt}-(1+t)^{m}\Delta u
 +\frac{\mu_{1}}{1+t}u_t
 +\frac{\mu_{2}^{2}}{(1+t)^{2}}u
 =\abs{u}^{p},
\end{equation}
for which the interaction between damping and mass is encoded by the
discriminant
\begin{equation}\label{eq:intro-delta}
 \delta:=(\mu_{1}-1)^{2}-4\mu_{2}^{2}.
\end{equation}
A substantial part of the available theory for the power nonlinearity has been developed under the assumption $\delta\geq0$. In this regime, one can construct time-dependent functions of fixed sign that solve the ordinary differential equation arising from the temporal part of the homogeneous adjoint problem. These functions can be incorporated into positive test functions or multipliers, a feature that is repeatedly exploited in blow-up arguments based on test function, comparison, and iteration methods. Depending on the relative influence of damping and mass, the threshold may then be governed by a heat-like or a wave-like mechanism. More precisely, the available theory reveals a competition between the shifted values
\begin{equation}\label{eq:shifted-F-T}
 p_{\mathrm{ST}}\!\left(
 n+\frac{2\mu_{1}}{m+2},m
 \right)
 \qquad\text{and}\qquad
    p_{\mathrm F}\!\left(
 \frac{(m+2)n+\mu_{1}-1-\sqrt{\delta}}{2}
 \right),
\end{equation} 
with their maximum arising as the natural blow-up threshold in several
parameter regimes. Blow-up, lifespan estimates and complementary global-existence results for different ranges of the parameters were obtained in \cite{NascimentoPalmieriReissig2017,PalmieriEven2019, PalmieriOdd2019, PalmieriReissig2018,
PalmieriReissig2019,PalmieriTu2019} for $m=0$. See also \cite{ DAbbicco2015,DLR2015} for the scale-invariant damping problem for $\mu_2=0$.  The genuinely variable-speed case has been studied more recently. Palmieri \cite{Palmieri2025} proved blow-up and lifespan estimates for $m>-2$ and $\delta\geq0$ below the larger of the shifted Fujita and Strauss-Tricomi exponents, also treating the Fujita-type borderline. The Tricomi critical case was addressed in \cite{LaiPalmieriTakamura2026}, while further critical lifespan estimates were obtained in \cite{LiGuo2026}. On the global-existence side, \cite{LiGuo2025} proved small-data global existence in a strongly dissipative regime, which, together with \cite{Palmieri2025}, identifies the shifted Fujita exponent as critical in that parameter region.

When $\delta<0$, the usual blow-up arguments face a loss of the positivity mechanism available in the nonoscillatory regime: the standard temporal profiles associated with the homogeneous adjoint problem become oscillatory, so that positive test functions and multipliers can no longer be constructed in the same direct way as for $\delta\geq0$. Overcoming this obstruction therefore requires more specialized adjoint constructions, and the nonlinear theory in this regime is consequently much less developed.

For $m=0$, \cite{DAbbiccoPalmieri2021} proved small-data global existence for $p$ above an explicitly determined threshold. Blow-up results for derivative-type nonlinearities have also been obtained more recently. For the nonlinearity $\abs{u_t}^{p}$, finite-time blow-up and lifespan estimates were established in \cite{Hamza2026},  while \cite{ChenHamza2026} extended this analysis to more general time-dependent damping and mass coefficients, including the critical case and without imposing a sign condition on $\delta$.

The relation between the regime $\delta<0$ and the equation considered in
the present paper can be seen by applying the transformation
\[
 u(t,x)=(1+t)^{-\mu_{1}/2}w(t,x)
\]
to the homogeneous linear equation associated with
\eqref{eq:intro-scale-invariant}. It gives
\[
 w_{tt}-(1+t)^{m}\Delta w
 +\frac{\mu^{2}}{(1+t)^{2}}w=0,
 \qquad
 \mu^{2}
 =
 \mu_{2}^{2}
 +\frac{\mu_{1}}{2}
 -\frac{\mu_{1}^{2}}{4}.
\]
The transformed mass parameter satisfies $4\mu^{2}-1=-\delta$. Therefore, identifying $m=2\ell$, the linear operator in \eqref{eq:intro-scale-invariant} is equivalent to the linear operator appearing in \eqref{eq:intro-main}. Also, the assumption $\mu^{2}>1/4$ adopted here corresponds precisely to the oscillatory regime $\delta<0$ of the Euler-Poisson-Darboux-Tricomi model.

The blow-up theory in this regime was recently developed in \cite{MarconNascimentoSantos2026}. For \eqref{eq:intro-scale-invariant}, with decreasing propagation speed $-2<m<0$ and $\delta<0$, finite-time blow-up was proved below the shifted Strauss-Tricomi exponent
\[
 p_{\mathrm{ST}}\!\left(
 n+\frac{2\mu_{1}}{m+2},m
 \right),
\]
whenever this exponent is larger than one. The corresponding result for
the undamped problem \eqref{eq:intro-main}, obtained by setting
$\mu_{1}=0$ and $m=2\ell$, gives blow-up throughout
\[
 1<p<\pc(n,\ell),
\]
where, as observed above, $\pc(n,\ell)=p_{\mathrm{ST}}(n,2\ell)$. The borderline value $p=\pc(n,\ell)$ was not covered by \cite{MarconNascimentoSantos2026}.

The aim of the present paper is to settle precisely this case.
We prove that finite-time blow-up persists for $\pc=\pc(n,\ell)$ and establish the lifespan estimate
\begin{equation}\label{eq:intro-lifespan}
 T_{\eps}
 \leq
 \exp\!\left(
 C\eps^{-\pc(\pc-1)}
 \right).
\end{equation}
%
Together with \cite{MarconNascimentoSantos2026}, this yields blow-up
throughout the full range
\[
 1<p\leq\pc(n,\ell)
\]
in the oscillatory regime $\mu^{2}>1/4$.

This result completes the blow-up side of the expected Strauss-type
threshold up to and including the borderline exponent, but it does not
yet provide a complete identification of the critical exponent. Indeed,
the complementary small-data global-existence problem for
$p>\pc(n,\ell)$ remains open when $\mu^{2}>1/4$.

The current state of the problem is summarized in
\Cref{fig:blowup-region}. For fixed $n$, the dashed line
$n(1+\ell)=1$ separates two qualitatively different regimes. When
$n(1+\ell)\leq1$, finite-time blow-up is known for every $p>1$ by
\cite{MarconNascimentoSantos2026}. When $n(1+\ell)>1$, the equation
$\gamma(n,\ell;p)=0$ defines the Strauss-type exponent
$\pc(n,\ell)>1$. In this regime, the blue region
$\gamma(n,\ell;p)<0$ corresponds to the previously known 
blow-up range, whereas the solid curve $\gamma(n,\ell;p)=0$ is the
Strauss-critical case established in the present paper. The gray region
$\gamma(n,\ell;p)>0$ remains open with respect to
small-data global existence in the oscillatory regime.

\begin{figure}[htbp]
\centering
\begin{tikzpicture}
    \pgfmathsetmacro{\myN}{10}
    \pgfmathsetmacro{\pmax}{7}
    \pgfmathsetmacro{\xmax}{\pmax + 0.25}

    \pgfmathsetmacro{\asy}{-(\myN - 1) / \myN}

    \pgfmathsetmacro{\proot}{
        ( (\myN + 1) + sqrt( (\myN + 1)^2 + 8*(\myN - 1) ) )
        / ( 2*(\myN - 1) )
    }

    \begin{axis}[
        width=10cm,
        height=7.5cm,
        xmin=1, xmax=\xmax,
        ymin=-1, ymax=0.05,
        axis x line=bottom,
        axis y line=left,
        axis line style={
            AxisColor,
            line width=1.2pt,
            line join=round
        },
        x axis line style={-{myarrow}},
        y axis line style={-},
        major tick length=4.5pt,
        xtick={1},
        xticklabels={1},
        xtick style={AxisColor, line width=1.2pt},
        tick label style={font=\normalsize, color=AxisColor},
        ytick={-1, \asy, 0},
        yticklabels={$-1$, $-1+\frac{1}{n}$, $0$},
        ytick style={AxisColor, line width=1.2pt},
        xlabel={$p$},
        ylabel={$\ell$},
        xlabel style={font=\Large, color=AxisColor},
        ylabel style={font=\Large, color=AxisColor, rotate=-90},
        enlargelimits=false,
        clip=true,
        declare function={
            crit(\x) =
            ((\myN + 1)*\x + 2 - (\myN - 1)*\x^2)
            / ((\x - 1)*(\myN*\x + 2));
        },
        legend style={
            at={(0.9, 0.9)},
            anchor=north east,
            font=\small,
            draw=LegBorderColor,
            line width=0.75pt,
            fill=white,
            fill opacity=0.94,
            text opacity=1,
            cells={anchor=west},
            rounded corners=1.5pt
        }
    ]

    \fill[ValidColor, opacity=0.5]
        (1, -1) -- (1, 0) -- (\proot, 0)
        -- plot[domain=\proot:\pmax, samples=200]
            (\x, {crit(\x)})
        -- (\pmax, -1) -- cycle;

    \fill[InvalidColor, opacity=0.95]
        (\proot, 0)
        -- plot[domain=\proot:\pmax, samples=200]
            (\x, {crit(\x)})
        -- (\pmax, 0) -- cycle;

    \draw[
        RefColor,
        opacity=0.6,
        line width=1.25pt,
        dash pattern=on 4.0pt off 3.0pt
    ]
        (1, \asy) -- (\pmax, \asy);

    \draw[
        BoundaryColor,
        line width=2pt,
        line cap=round,
        line join=round
    ]
        plot[domain=\proot:\pmax, samples=200]
            (\x, {crit(\x)});

    \addlegendimage{
        area legend,
        fill=ValidColor,
        fill opacity=0.28,
        draw=none
    }
    \addlegendentry{$\gamma(n,\ell;p)<0$}

    \addlegendimage{
        area legend,
        fill=InvalidColor,
        fill opacity=0.95,
        draw=none
    }
    \addlegendentry{$\gamma(n,\ell;p)>0$}

    \addlegendimage{
        line legend,
        draw=BoundaryColor,
        line width=2.35pt
    }
    \addlegendentry{$\gamma(n,\ell;p)=0$}

    \addlegendimage{
        line legend,
        draw=RefColor,
        line width=1.25pt,
        dash pattern=on 4.0pt off 3.0pt
    }
    \addlegendentry{$n(\ell+1)=1$}

    \end{axis}
\end{tikzpicture}
\caption{
Known blow-up region in the $(p,\ell)$-plane for fixed $n=10$,
with $p\geq1$ and $-1<\ell\leq0$.
}
\label{fig:blowup-region}
\end{figure}

The main strategy of the proof is to reduce the PDE problem to a critical
ordinary differential inequality to which an ODE comparison principle can
be applied.  This type of comparison argument was introduced by Zhou and
Han \cite{ZhouHan2014} in the study of critical semilinear wave equations
and has subsequently been used in related critical blow-up and lifespan
problems; see, for instance, \cite{IkedaSobajima2018,PalmieriTu2019}.
To carry out this reduction, we introduce a weighted nonlinear functional
whose weight is a positive self-similar solution of the homogeneous adjoint
equation expressed in terms of the Gauss hypergeometric function.  Such hypergeometric adjoint and test-function constructions have proved useful
in critical blow-up problems for semilinear wave equations and their
scale-invariant and Euler-Poisson-Darboux-Tricomi extensions, for instance, \cite{IkedaSobajima2018,
IkedaSobajimaWakasa2019,LiGuo2026,PalmieriTu2019,Zhou1992,Zhou2007,ZhouHan2014}. We then combine this adjoint construction with the lower estimates obtained for general powers in \cite{MarconNascimentoSantos2026}.  Together with new weighted estimates adapted to the critical case, these bounds allow us to derive a closed nonlinear differential inequality to which the ODE comparison principle can be applied.  The corresponding ODE comparison principle then yields finite-time blow-up and the exponential upper bound for the
lifespan.

The paper is organized as follows.  In \Cref{sec:preliminaries}, we
introduce the notation and the solution class and state the main result.
In \Cref{sec:general-adj-identity}, we establish a general moment identity for solutions of the homogeneous adjoint equation, which provides the basic link between the PDE and the weighted functionals used later.  In
\Cref{sec:hypergeom}, we construct the positive self-similar adjoint
functions in terms of the Gauss hypergeometric function and establish the
estimates needed in the critical argument.  In
\Cref{sec:critical-inequality}, we combine these adjoint estimates with the
lower bound from \cite{MarconNascimentoSantos2026}, introduce the relevant
weighted and time-integrated functionals, and derive the critical
differential inequality.  In \Cref{sec:ode}, we prove the ODE comparison
lemma used to control the lifespan.  Finally, in \Cref{sec:proof-main}, we
apply this comparison principle to complete the proof of the main theorem
and obtain the exponential upper bound for the lifespan.

\section{Notations and main result}\label{sec:preliminaries}

We begin by introducing the notation and the main auxiliary objects employed in
the paper. In particular, we recall the polynomial defining the
borderline exponent, specify the coefficients and propagation quantities
associated with the linear operator, and introduce the spatial weights used
in the construction of the separated adjoint solutions.

Throughout the paper, we use the following definitions and conventions:
\begin{enumerate}[label=$(\mathrm{D}\arabic*)$]

\item\label{def:critical-exponent}
\textup{(Critical polynomial and critical exponent)}
Recall from \eqref{eq:intro-critical-polynomial} that
\[
    \gamma(n,\ell;p)
    =
    \bigl((1+\ell)n-1\bigr)p^2
    -
    \bigl((1+\ell)n+1-2\ell\bigr)p
    -
    2(1+\ell).
\]
Whenever $(1+\ell)n>1$, we denote by $\pc(n,\ell)$ the unique positive root of $\gamma(n,\ell;p)=0$. Explicitly,
\begin{equation}\label{eq:critical-root}
    \pc(n,\ell)
    =
    \frac{
        (1+\ell)n+1-2\ell
        +
        \sqrt{
            \bigl((1+\ell)n+1-2\ell\bigr)^2
            +
            8(1+\ell)\bigl((1+\ell)n-1\bigr)
        }
    }{
        2\bigl((1+\ell)n-1\bigr)
    }.
\end{equation}

\item\label{def:operator-coefficients}
\textup{(Velocity, propagation, and potential)}
We set
\begin{equation}\label{eq:a-A-V}
    a(t):=(1+t)^\ell,
    \qquad
    A(t):=\int_0^t a(s)\dd s
    =
    \frac{(1+t)^{1+\ell}-1}{1+\ell},
    \qquad
    V(t):=\frac{\mu^2}{(1+t)^2}.
\end{equation}
Thus, the linear operator in \eqref{eq:intro-main} can be written as
\[
    \Lop
    :=
    \partial_t^2-a(t)^2\Delta+V(t).
\]
Since $-1<\ell\leq0$, the velocity $a$ is positive and nonincreasing,
whereas $A$ is strictly increasing and satisfies $A(t)\to+\infty$ as $t\to+\infty$.

\item\label{def:propagation-radius}
\textup{(Propagation quantities)}
For a fixed radius $R>0$, we define
\[
    \rho(t):=R+A(t).
\]
In the solution class introduced below, $R$ will be an upper bound for the
support radius of the initial data, and $\rho(t)$ will describe the
corresponding propagation radius at time $t$.

Whenever
\[
    0<R<\frac{1}{1+\ell},
\]
we also set
\[
    d_R:=1-(1+\ell)R>0.
\]
As shown in \eqref{eq:cone-gap}, this quantity measures the separation
between the propagation region and the boundary of the self-similar cone.

\item\label{def:oscillatory-parameter}
\textup{(Oscillatory parameter)}
Under the assumption $\mu^2>1/4$, we define
\[
    \omega:=\sqrt{\mu^2-\frac14}>0.
\]
This parameter appears in the order of the modified Bessel functions and as parameters of the hypergeometric adjoint
profiles.

\item\label{def:spatial-notation}
\textup{(Spatial notation and conjugate exponents)}
For $r>0$, we write
\[
    B_r:=\{x\in\R^n:|x|<r\}.
\]
For every $p>1$, we denote its Hölder conjugate by $p':=\frac{p}{p-1}$.

\item\label{def:varphi-lambda}
\textup{(Yordanov--Zhang eigenfunction)}
For $\lambda>0$, we define $\varphi_\lambda: \R^n \to \R$ by
\[
    \varphi_\lambda(x)
    :=
    \int_{\mathbb S^{n-1}}
    e^{\lambda x\cdot\theta}\dd\sigma_\theta.
\]
This function is smooth and positive and satisfies $\Delta\varphi_\lambda=    \lambda^2\varphi_\lambda$ in $\R^n$.

\item\label{def:comparison-notation}
Given two nonnegative quantities $X$ and $Y$, we write
\[
    X\lesssim Y
\]
if there exists a constant $C>0$, independent of the relevant variables,
such that $X\leq CY$. Moreover, if both $X\lesssim Y$ and $Y\lesssim X$ hold, we write
\[
    X\asymp Y.
\]

\item\label{def:constants}
The symbols $C$ and $c$ denote generic positive constants whose values may
change from line to line. Unless otherwise stated, these constants may depend
on the fixed parameters of the problem and on the initial profiles $f$ and
$g$, but are independent of $\eps$ and of the lifespan. Any additional
dependence will be indicated when relevant.

\end{enumerate}

\begin{remark}\label{rem:critical-algebra}
Let $-1<\ell\leq0$ and $n\geq1$. Set $A_0:=(1+\ell)n-1$, $B_0:=(1+\ell)n+1-2\ell>0$.
Then
\[
    \gamma(n,\ell;p)=A_0p^2-B_0p-2(1+\ell).
\]
If $A_0\leq0$, this expression is negative for every $p>0$. If
$A_0>0$, the product of its roots is negative, so it has exactly one
positive root. Since $\gamma(n,\ell;1)=-4$, this root is larger than one and is given by \eqref{eq:critical-root}.
Thus, $\gamma(n,\ell;p)=0$ has a root $p>1$ if and only if
$(1+\ell)n>1$. Since $1+\ell\leq1$, this condition also implies $n\geq2$.
\end{remark}

\begin{definition}[Weak solution with finite speed of propagation]\label{def:solution}
Let $T\in(0,\infty]$, $p>1$, $\eps>0$, $f\in H^1(\R^n)$, and $g\in L^2(\R^n)$.  A function $u$ is a weak solution with finite speed of propagation on $[0,T)$ for 
\begin{equation}\label{eq:main-p}
 \begin{cases}
 u_{tt}-a(t)^2\Delta u+V(t)u=\abs{u}^p,
     &(t,x)\in\R^n\times(0,T),\\
 u(0,x)=\eps f(x),\qquad u_t(0,x)=\eps g(x),
     &x\in\R^n,
 \end{cases}
\end{equation}
if the following conditions hold:
\begin{enumerate}[label=$(\roman*)$]
\item $u\in C^1([0,T);L^1_{\loc}(\R^n))$ and $|u|^p\in L^1_{\loc}([0,T)\times\R^n)$;
\item there exists $R_0>0$ such that
$\supp f\cup \supp g\subset B_{R_0}$,
and, for all $t\in[0,T)$,
\begin{equation}\label{eq:finite-propagation}
\supp u(t,\cdot)\subset B_{R_0+A(t)};
\end{equation}
\item for every $\psi\in C_c^\infty([0,T)\times\R^n)$,
\begin{align}\label{eq:weak-form}
 &\int_0^T\!\int_{\R^n}
 u(t,x)\Bigl(\psi_{tt}(t,x)-a(t)^2\Delta\psi(t,x)+V(t)\psi(t,x)\Bigr)\dd x\dd t
 \notag\\
 &\qquad=
 \int_0^T\!\int_{\R^n}\abs{u(t,x)}^p\psi(t,x)\dd x\dd t
 +\eps\int_{\R^n}\Bigl(g(x)\psi(0,x)-f(x)\psi_t(0,x)\Bigr)\dd x;
\end{align}

\end{enumerate}
\end{definition}

\begin{remark}\label{rem:ut-support}
The regularity in \Cref{def:solution} and the continuity of
$t\mapsto R+A(t)$ imply
\[
    \supp u_t(t,\cdot)
    \subset
    \overline{B_{R+A(t)}}.
\]
Indeed, every compact set contained in the complement of this closed ball
remains outside the propagation ball for all times in a neighborhood of
$t$. On such a compact set, the restriction of $u$ is therefore identically
zero as an $L^1$-valued function, and hence so is its time derivative.
\end{remark}

Having fixed the notion of solution, we can now state our main result.

\begin{theorem}[Critical blow-up and lifespan estimate]
\label{thm:main}
Let $n\in\N$ and $-1<\ell\leq0$ such that $(1+\ell)n>1$. Let $\mu^2>1/4$ and let
$\pc=\pc(n,\ell)$ be as defined in \ref{def:critical-exponent}. Suppose that
\begin{equation}\label{eq:data-assumptions}
    f\in H^1(\R^n),
    \qquad
    g\in L^2(\R^n),
    \qquad
    f,g\geq0\ \text{a.e.},
    \qquad
    (f,g)\neq(0,0),
\end{equation}
and that, for some $0<R<(1+\ell)^{-1}$, the initial data satisfy $\supp f\cup\supp g    \subset \overline{B_R}$.

Then, for every $\eps>0$, any weak solution of \eqref{eq:intro-main} with finite speed of propagation, in the sense of \Cref{def:solution}, blows up in finite
time, that is, its maximal existence time $T_\eps$ is finite.

Moreover, there exist constants $\eps_0>0$ and $C>0$, independent of
$\eps$, such that, for every $0<\eps\leq\eps_0$,
\begin{equation}\label{eq:lifespan-exp}
    T_\eps
    \leq
    \exp \bigl(C\eps^{-\pc(\pc-1)}\bigr).
\end{equation}
\end{theorem}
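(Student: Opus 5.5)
We follow the Zhou--Han / Ikeda--Sobajima scheme outlined in the introduction: reduce the problem to a critical ordinary differential inequality for a time-integrated weighted functional, then apply an ODE comparison lemma.

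\textbf{Step 1: the adjoint solution.} First we construct a positive self-similar solution $\Phi_\beta(t,x)$ of the homogeneous adjoint equation $\partial_t^2\Phi-a(t)^2\Delta\Phi+V(t)\Phi=0$. We look for it in the form
\[
\Phi_\beta(t,x)=(1+t)^{\kappa}\,\bigl(\rho_0(t)\bigr)^{-\beta}\,F\!\Bigl(\tfrac{|x|^2}{\rho_0(t)^2}\Bigr),
\qquad \rho_0(t)=\tfrac{(1+t)^{1+\ell}}{1+\ell},
\]
where $F$ is a Gauss hypergeometric function ${}_2F_1$. Its parameters involve $\beta$, $n$, $\ell$ and the complex exponents $\tfrac12\pm i\omega$ coming from the Euler-type temporal ODE. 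The required positivity should come from pairing these conjugate complex exponents, so that $F$ is real and positive on $[0,1)$.

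The assumption $R<(1+\ell)^{-1}$ ensures that the support $B_{R+A(t)}$ stays strictly inside the cone $|x|<\rho_0(t)$, with gap $d_R$. Consequently, on the support we have the two-sided bound
\[
\Phi_\beta(t,x)\asymp (1+t)^{\kappa-(1+\ell)\beta}\,\bigl(\rho_0(t)-|x|\bigr)^{\,\cdot}\ \text{powers},
\]
for $\beta$ on either side of the threshold $\beta_c=\frac{(1+\ell)n-1}{2(1+\ell)}$-type value. A log factor appears exactly at the borderline.

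\textbf{Step 2: the moment identity and the functional inequality.} Next, the general moment identity of \Cref{sec:general-adj-identity} (test the weak form against $\Phi_\beta\eta_T$ with a cutoff $\eta_T$) gives
\[
\int_0^t\!\!\int|u|^p\Phi_\beta + \eps C_{f,g}\le \text{boundary terms}.
\]
For a suitable $\beta_1$ we define
\[
G(t)=\int_0^t\!\!\int |u|^p\Phi_{\beta_1}\,dx\,ds,
\qquad
Y(T)=\int_1^T G(t)\,\frac{dt}{t}\ \text{(or a weighted variant)}.
\]
Using Hölder's inequality with the second adjoint function $\Phi_{\beta_2}$, choosing $\beta_2$ so that the exponents balance exactly when $\gamma(n,\ell;\pc)=0$, we aim for the inequality
\[
T\,Y'(T)\ \gtrsim\ (\log T)^{-(p-1)}\,Y(T)^{p}\ \text{(schematically)},
\qquad
Y(T)\gtrsim \eps^{p}\log T .
\]
The second bound comes from the lower estimate of \cite{MarconNascimentoSantos2026}, $\int|u|^p\gtrsim\eps^p(1+t)^{\dots}$, integrated against $\Phi_{\beta_1}$. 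At criticality the integrand behaves like $1/t$, which produces the $\log T$.

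\textbf{Step 3: ODE comparison.} The ODE lemma of \Cref{sec:ode} is a Zhou--Han type statement: if $Y\ge \delta\log T$ and $T Y'\ge c(\log T)^{1-p}Y^p$, then $T\le\exp(C\delta^{-(p-1)})$. With $\delta\asymp\eps^p$ this gives
\[
T_\eps\le \exp\!\bigl(C\eps^{-\pc(\pc-1)}\bigr).
\]
For arbitrary $\eps$, finiteness of the lifespan follows in the same way, since the data are nontrivial and the constants are fixed.

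\textbf{Main obstacle.} I expect the hardest step to be the sharp estimates of $\Phi_\beta$ near the cone boundary and at the critical $\beta$, which are needed to make the Hölder step produce precisely the $\log$ powers. The difficulty is controlling ${}_2F_1$ with complex parameters uniformly on the support. I would first try Euler's integral representation and the connection formula at $z=1$, using $d_R>0$ to stay away from the singular point.
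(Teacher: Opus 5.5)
The decisive step is missing. In Step~2 the closed critical differential inequality, which is the entire content of the critical case, is only stated as a target (``we aim for'', ``schematically''). You never fix $\beta_1,\beta_2$ and never estimate the ``boundary terms''.

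The functional you do write down is also not at the critical scaling. Take $\Phi_{\beta_1}=\Phi_*$, the choice for which the lower bound from the earlier paper behaves like $1/t$, i.e.\ $\int|u(t)|^{\pc}\Phi_*\dd x\gtrsim\eps^{\pc}(1+t)^{-1}$ for large $t$. Then your $G(t)=\int_0^t\int|u|^{\pc}\Phi_*$ grows like $\eps^{\pc}\log t$, and $Y(T)=\int_1^T G(t)\,t^{-1}\dd t\gtrsim\eps^{\pc}(\log T)^2$. Combined with $TY'\gtrsim(\log T)^{1-\pc}Y^{\pc}$ and your (correct) first-order ODE lemma, this would give $\log T_\eps\lesssim\eps^{-(\pc-1)}$. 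That is better than the critical scale $\eps^{-\pc(\pc-1)}$, a sign that the H\"older step will not deliver that inequality for this $Y$.

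The real obstruction is the time-boundary term. Testing up to a sharp time $t$ leaves $\int|u(t,x)|\Phi_*(t,x)\dd x$. H\"older bounds this only by $(1+t)^{1+1/\pc}G(t)^{1/\pc}$ with the pointwise-in-time quantity $G(t)=\int|u(t)|^{\pc}\Phi_*\dd x$ (the paper's $G$), not by any time integral. The paper removes it by integrating the moment identity once more. With $K(t)=\int_0^t(t-s)(2+s)G(s)\dd s$ and $J(t)=\int_0^t(2+s)^{-3}K(s)\dd s$ one has the exact identity $(2+t)^2J(t)=\frac12\int_0^t(t-s)^2G(s)\dd s$. The right-hand side is bounded by $CK'(t)^{1/\pc}(2+t)^{1+1/\pc'}(\log(2+t))^{1/\pc'}$. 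Since $K=(2+t)^3J'$, this closes as the second-order inequality $(2+t)^2J''+3(2+t)J'\ge C(\log(2+t))^{1-\pc}J^{\pc}$, i.e.\ $\mathcal J''+2\mathcal J'\ge C\tau^{1-\pc}\mathcal J^{\pc}$ in $\tau=\log(2+t)$. That is why a Zhou--Han comparison with the blow-up subsolution $M_h(2h-x)^{-2/(\pc-1)}$ is needed rather than a first-order lemma.

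Your mention of a cutoff $\eta_T$ points to the Ikeda--Sobajima--Wakasa alternative, which does yield a first-order inequality. It does so only if $Y$ is built from quantities localized to $t\in[\rho/2,\rho]$, where the cutoff derivatives live, rather than from $\int_0^t$. As written, your cutoff and your functional do not match.

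Step~1 also misplaces the logarithm and omits the choice that balances the exponents. The paper uses a single adjoint $\Phi_*=\Phi_{\beta_{\pc}}$ with $\beta_{\pc}=\frac{(1+\ell)n+1}{2}-\frac{1+\ell}{\pc}$, so that $c-a-b=1/\pc\in(0,1)$. This is not the threshold $c-a-b=0$ where $\Phi_\beta$ itself becomes logarithmic.

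With this choice, $\Psi$ is increasing with $\Psi(1)<\infty$ by Gauss' summation, so $\Phi_*\asymp(1+t)^{1-\beta_{\pc}}$ on the support, with no singularity. The singularity sits in $\partial_t\Phi_*$. From $\Psi'=\frac{ab}{c}\,{}_2F_1(a+1,b+1;c+1;z)\asymp(1-z)^{-1/\pc'}$ one gets $|\partial_t\Phi_*|\lesssim(1+t)^{-\beta_{\pc}}(1-y)^{-1/\pc'}$. Hence the H\"older dual weight $|\partial_t\Phi_*|^{\pc'}\Phi_*^{-\pc'/\pc}$ carries exactly the borderline factor $(1-y)^{-1}$. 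The logarithm appears only when this is integrated over $|x|\le R+A(t)$, where $y\le 1-d_R(1+t)^{-(1+\ell)}$.

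The powers of $t$ then cancel through $2-\beta_{\pc}+\sigma_{\pc}=0$ and $((1+\ell)n+1-\beta_{\pc})/\pc'=1+1/\pc$, each equivalent to $\gamma(n,\ell;\pc)=0$. No Euler integral or connection formula is needed: positivity is just the nonnegativity of the coefficients $|(a)_k|^2/((c)_k k!)$.

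If $\Phi_{\beta_2}$ was meant to absorb $\partial_t\Phi_{\beta_1}$ as in the free-wave family, where $\partial_t\Phi_\beta=-\beta\Phi_{\beta+1}$, that relation comes from time-translation invariance. It fails here, so $\partial_t\Phi_*$ must be estimated directly.

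Finally, the sign of the data term is not automatic. The condition $\eps\int(g\Phi_*(0)-f\partial_t\Phi_*(0))\ge0$ needs $\partial_t\Phi_*<0$. The paper obtains this from $\beta_{\pc}>1$, which it checks using $\gamma(n,\ell;\pc)=0$.
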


\begin{remark}\label{rem:ell-zero}
When $\ell=0$, the polynomial in \ref{def:critical-exponent} becomes $(n-1)p^2-(n+1)p-2$. Thus, \Cref{thm:main} recovers the critical Strauss lifespan scale for the wave equation with oscillatory scale-invariant mass, under the localization condition $R<1$.
\end{remark}

\section{A general adjoint moment identity}\label{sec:general-adj-identity}

In this section, we establish a general moment identity for solutions of the
homogeneous adjoint equation. This identity provides the link between the
semilinear problem and the weighted functionals used in the critical
blow-up argument. It will later be applied to the positive self-similar
adjoint function constructed in \Cref{sec:hypergeom}.

Recall that the linear operator $\Lop =\partial_t^2-a(t)^2\Delta+V(t)$ is formally self-adjoint. We thus consider a real-valued function $\Phi$ satisfying
\[
    \Lop\Phi=0.
\]
For $\tau<T$, define the propagation set
\begin{equation}\label{def:Ktau}
    K_\tau
    :=
    \left\{
        (t,x)\in[0,\tau]\times\R^n:
        |x|\leq R+A(t)
    \right\}.
\end{equation}
By finite speed of propagation, all pairings involving $u$, $u_t$, or
$|u|^p$ on $[0,\tau]$ are supported in $K_\tau$. Consequently, it is
sufficient to assume that $\Phi$ is smooth and satisfies $\Lop\Phi=0$ in a
neighborhood of this set.

Associated with $\Phi$, we define the weighted moment
\begin{equation}\label{eq:M-Phi}
    M_\Phi(t)
    :=
    \int_{\R^n}u(t,x)\Phi(t,x)\dd x,
\end{equation}
the Wronskian moment
\begin{equation}\label{eq:W-Phi}
    W_\Phi(t)
    :=
    \int_{\R^n}
    \bigl(
        u_t(t,x)\Phi(t,x)
        -
        u(t,x)\Phi_t(t,x)
    \bigr)\dd x,
\end{equation}
and the nonlinear weighted moment
\begin{equation}\label{eq:G-Phi}
    G_\Phi(t)
    :=
    \int_{\R^n}|u(t,x)|^p\Phi(t,x)\dd x.
\end{equation}

To motivate the identity, suppose formally that $u$ and $\Phi$ are
sufficiently smooth. Differentiating the Wronskian moment and using
$\Lop u=|u|^p$ and $\Lop\Phi=0$, we obtain
\begin{align*}
    W_\Phi'(t)
    =
    \int_{\R^n}
    \bigl(u_{tt}\Phi-u\Phi_{tt}\bigr)\dd x
    =
    a(t)^2
    \int_{\R^n}
    \bigl(\Delta u\,\Phi-u\Delta\Phi\bigr)\dd x
    +
    \int_{\R^n}|u|^p\Phi\dd x
    =
    G_\Phi(t),
\end{align*}
where the potential terms cancel and the last equality follows from spatial
integration by parts. In particular, if $\Phi\geq0$ on the propagation
region, then $G_\Phi\geq0$, and hence $W_\Phi$ is nondecreasing.

The following lemma justifies this computation directly from the weak
formulation and records both its differential and integrated consequences.
These identities will later be applied in \Cref{sec:critical-inequality}.

\begin{lemma}\label{lem:moment-identity}
Let $u$ be a weak solution to \eqref{eq:main-p} on  $[0,T)$ in the sense of \Cref{def:solution}. Assume that, for every $\tau<T$, the function $\Phi$ is real-valued and smooth in a neighborhood of $K_\tau$ defined in \eqref{def:Ktau} and that it satisfies
\[ \Lop\Phi=0.\]
Let $M_\Phi$, $W_\Phi$, and $G_\Phi$ be the moments defined in \eqref{eq:M-Phi}, \eqref{eq:W-Phi}, and \eqref{eq:G-Phi}, respectively.
Then, we have $M_\Phi\in C^1([0,T))$, $W_\Phi\in W^{1,1}_{\loc}([0,T))$, and
\begin{equation}\label{eq:M-prime-general}
  M_\Phi'(t)
    =
    W_\Phi(t)
    +
    2\int_{\R^n}u(t,x)\Phi_t(t,x)\dd x,
\end{equation}
for every $t\in[0,T)$, while
\begin{equation}\label{eq:W-prime-general}
    W_\Phi'(t)=G_\Phi(t)
\end{equation}
for almost every $t\in(0,T)$. Consequently,
\begin{equation}\label{eq:W-integrated-general}
    W_\Phi(t)=W_\Phi(0)+\int_0^tG_\Phi(s)\dd s
\end{equation}
and
\begin{equation}\label{eq:moment-integrated-general}
    M_\Phi(t)
    -
    2\int_0^t\int_{\R^n}
    u(s,x)\Phi_s(s,x)\dd x\dd s
    =
    M_\Phi(0)
    +
    tW_\Phi(0)
    +
    \int_0^t(t-s)G_\Phi(s)\dd s.
\end{equation}
For future reference, we also record that the initial moments are
\begin{equation}\label{eq:initial-moments-general}
    M_\Phi(0)
    =
    \eps\int_{\R^n}f(x)\Phi(0,x)\dd x
\end{equation}
and
\begin{equation}\label{eq:initial-W-general}
    W_\Phi(0)
    =
    \eps\int_{\R^n}
    \bigl(
        g(x)\Phi(0,x)
        -
        f(x)\Phi_t(0,x)
    \bigr)\dd x.
\end{equation}
In particular, if $\Phi\ge0$ on the propagation region, we have that
$W_\Phi$ is nondecreasing.
\end{lemma}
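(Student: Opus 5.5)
The plan is to derive everything from the weak formulation \eqref{eq:weak-form}, tested against truncations of $\Phi$. The first step is to build admissible test functions. Fix $\tau<T$ and choose a cutoff $\chi\in C_c^\infty(\R^n)$ with $\chi\equiv1$ on a neighborhood of $\overline{B_{R_0+A(\tau)}}$ and support inside the neighborhood of $K_\tau$ where $\Phi$ is smooth. For a time cutoff $\eta\in C_c^\infty([0,\tau))$ we then use $\psi=\eta(t)\chi(x)\Phi(t,x)$. By \eqref{eq:finite-propagation}, $u$ vanishes where $\chi\neq1$ or where its derivatives are nonzero. Hence every term in which a derivative falls on $\chi$ disappears, and since $\Lop\Phi=0$ we get
\[
    \Lop\psi=\eta''\Phi+2\eta'\Phi_t
\]
on the support of $u$. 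Substituting into \eqref{eq:weak-form} gives
\[
    \int_0^\tau\Bigl(\eta''(t)M_\Phi(t)+2\eta'(t)\int_{\R^n}u\Phi_t\dd x\Bigr)\dd t
    =\int_0^\tau\eta G_\Phi\dd t+\eta(0)\Bigl(\eps\int g\Phi(0)\dd x\Bigr)-\eps\int f\bigl(\eta'(0)\Phi(0)+\eta(0)\Phi_t(0)\bigr)\dd x .
\]

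The second step uses the regularity of $u$. Since $u\in C^1([0,T);L^1_{\loc})$ and $\Phi$ is smooth on the compact set $K_\tau$, the functions $M_\Phi$ and $N(t):=\int u\Phi_t\dd x$ are $C^1$ with $M_\Phi'=\int(u_t\Phi+u\Phi_t)\dd x$. Here Remark~\ref{rem:ut-support} confines $u_t$ to the ball, so that the cutoff is irrelevant. Writing $u_t\Phi+u\Phi_t=(u_t\Phi-u\Phi_t)+2u\Phi_t$ gives \eqref{eq:M-prime-general}. Evaluating at $t=0$, where $u(0)=\eps f$ and $u_t(0)=\eps g$, gives \eqref{eq:initial-moments-general} and \eqref{eq:initial-W-general}. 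The latter requires reading the initial data in the $C^1$ sense, which the definition only gives implicitly. I would justify it by taking $\eta\equiv1$ near $0$ in the identity above and comparing the result with one integration by parts.

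The third step integrates by parts once in the $\eta''$ term, using $M_\Phi\in C^1$:
\[
    \int\eta''M_\Phi=-\eta'(0)M_\Phi(0)-\int\eta'M_\Phi' .
\]
Combined with \eqref{eq:M-prime-general}, this makes the $\eta'$-terms collapse to $-\int\eta'W_\Phi$. The terms at $t=0$ match \eqref{eq:initial-moments-general}, so the identity becomes
\[
    -\int_0^\tau\eta'(t)W_\Phi(t)\dd t=\int_0^\tau\eta G_\Phi\dd t+\eta(0)W_\Phi(0)
\]
for all such $\eta$. This says precisely that the distributional derivative of $W_\Phi$ on $(0,\tau)$ is $G_\Phi$, which lies in $L^1_{\loc}$ because $|u|^p\in L^1_{\loc}$ and $\Phi$ is bounded on $K_\tau$. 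Hence $W_\Phi\in W^{1,1}_{\loc}$, and \eqref{eq:W-prime-general} holds a.e. Since $W_\Phi$ is continuous, choosing $\eta$ with $\eta(0)=1$ pins down the boundary value and gives \eqref{eq:W-integrated-general}.

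The last step integrates \eqref{eq:M-prime-general} from $0$ to $t$, inserts \eqref{eq:W-integrated-general}, and applies Fubini to $\int_0^t\int_0^sG_\Phi\,\dd r\,\dd s=\int_0^t(t-s)G_\Phi(s)\dd s$, which yields \eqref{eq:moment-integrated-general}. Monotonicity of $W_\Phi$ follows from $G_\Phi\ge0$ when $\Phi\ge0$ on the propagation region. The main obstacle I expect is the bookkeeping of the boundary terms at $t=0$, namely whether $\eta'(0)$ is allowed to be nonzero and how the data enter. I would first take $\eta$ constant near $0$ to get the distributional statement, and then treat general $\eta$ by a limiting argument.
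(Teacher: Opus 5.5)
Your route is the paper's: test \eqref{eq:weak-form} with $\eta\chi\Phi$, use $\Lop\Phi=0$ to reduce the operator to $\eta''\Phi+2\eta'\Phi_t$ on $\supp u$, get \eqref{eq:M-prime-general} from the product rule in $C^1([0,T);L^1_{\loc})$, identify $W_\Phi'=G_\Phi$ distributionally, and integrate twice. Your use of test functions with $\eta(0),\eta'(0)\neq0$ is more careful than the paper. The paper only takes $\eta\in C_c^\infty(0,\tau)$ and asserts that the initial moments ``follow directly from the initial conditions''. You instead extract them from the boundary terms of the weak formulation, which is what actually justifies them. There is, however, one step that fails as written, plus an ordering issue.

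\textbf{The cutoff.} Take a purely spatial $\chi\in C_c^\infty(\R^n)$ with $\chi\equiv1$ near $\overline{B_{R_0+A(\tau)}}$. Then $\chi\Phi$ makes sense only if $\Phi$ is smooth on the whole cylinder $[0,\tau]\times\overline{B_{R_0+A(\tau)}}$. Smoothness in a neighborhood of $K_\tau$ does not give this, and it fails in exactly the case the lemma is built for. Indeed, $\Phi_*$ is defined only on $\mathcal C_\ell$, whose slice at $t=0$ is $B_{1/(1+\ell)}$, while $R+A(\tau)>1/(1+\ell)$ for large $\tau$. Moreover, by \eqref{eq:Psi-prime-bounds}, $\Phi_*$ cannot be extended smoothly across $y=1$. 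The fix is the paper's: take $\chi\in C_c^\infty([0,T)\times\R^n)$ equal to one on a space-time neighborhood of $K_\tau$ and supported where $\Phi$ is smooth. Then all derivatives of $\chi$ vanish on a neighborhood of $\supp u\cap\{t\le\tau\}$, so $\Lop(\eta\chi\Phi)=\eta''\Phi+2\eta'\Phi_t$ there. The rest of your argument is unchanged.

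\textbf{Order at $t=0$.} As written, your third step cancels the $\eta'(0)$-terms and inserts $W_\Phi(0)$ using \eqref{eq:initial-moments-general} and \eqref{eq:initial-W-general} before they are proved. Arrange it as follows.
First, $\eta\in C_c^\infty((0,\tau))$ gives $W_\Phi'=G_\Phi$. Hence $-\int_0^\tau\eta'W_\Phi\dd t=\eta(0)W_\Phi(0)+\int_0^\tau\eta G_\Phi\dd t$ for every $\eta\in C_c^\infty([0,\tau))$.
Second, compare this with the weak identity for $\eta\equiv1$ near $0$, where $\eta'(0)=0$; this gives \eqref{eq:initial-W-general}.
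Third, any $\eta$ with $\eta'(0)\neq0$ then gives \eqref{eq:initial-moments-general}.
No limiting argument is needed, since every such $\eta$ is directly admissible.
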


\begin{proof}
Fix $0<\tau<T$. Choose a function
$\chi\in C_c^\infty([0,T)\times\R^n)$ which is equal to one in a
neighborhood of $K_\tau$ and whose support is contained in a region where
$\Phi$ is defined. By finite propagation, in every pairing with $u$,
$u_t$, or $|u|^p$ over $[0,\tau]$, the function $\chi\Phi$ may be replaced
by $\Phi$.

Let $\eta\in C_c^\infty(0,\tau)$ and use
\[
    \psi(t,x)=\eta(t)\chi(t,x)\Phi(t,x)
\]
as a test function in \eqref{eq:weak-form}. Since
$\Lop\Phi=0$, on the support of $u$ we have
\[
    \Lop(\eta\Phi)
    =
    \eta''\Phi+2\eta'\Phi_t.
\]
It follows that
\begin{equation}\label{eq:distribution-moment}
    \int_0^\tau M_\Phi(t)\eta''(t)\dd t
    +
    2\int_0^\tau
    \left(
        \int_{\R^n}u(t,x)\Phi_t(t,x)\dd x
    \right)\eta'(t)\dd t
    =
    \int_0^\tau G_\Phi(t)\eta(t)\dd t.
\end{equation}
Using the cutoff above, and since 
$u \in C^1([0,\tau];L^1_{\loc}(\R^n))$ while $\Phi$ is smooth in a neighborhood of $K_\tau$, the usual product rule for the corresponding $L^1$-pairing gives
\[
    M_\Phi'(t)
    =
    \int_{\R^n}
    \bigl(u_t\Phi+u\Phi_t\bigr)\dd x,
\]
which is precisely \eqref{eq:M-prime-general}. Hence, in the sense of
distributions,
\begin{align*}
    \langle W_\Phi',\eta\rangle
    &=
    -\int_0^\tau W_\Phi(t)\eta'(t)\dd t\\
    &=
    \int_0^\tau M_\Phi(t)\eta''(t)\dd t
    +
    2\int_0^\tau
    \left(
        \int_{\R^n}u(t,x)\Phi_t(t,x)\dd x
    \right)\eta'(t)\dd t\\
    &=
    \int_0^\tau G_\Phi(t)\eta(t)\dd t.
\end{align*}
Therefore, $W_\Phi'=G_\Phi$ in the sense of distributions. Since
$G_\Phi\in L^1(0,\tau)$, we obtain
$W_\Phi\in W^{1,1}(0,\tau)$ and \eqref{eq:W-integrated-general} follows.

Finally, integrating \eqref{eq:M-prime-general} and then using
\eqref{eq:W-integrated-general}, we find
\begin{equation*}
    M_\Phi(t)-M_\Phi(0)
    =
    tW_\Phi(0)
    +
    \int_0^t(t-s)G_\Phi(s)\dd s+
    2\int_0^t\int_{\R^n}
    u(s,x)\Phi_s(s,x)\dd x\dd s,
\end{equation*}
which is \eqref{eq:moment-integrated-general}. The formulas for the initial
moments follow directly from the initial conditions. If $\Phi\ge0$, then
$G_\Phi\ge0$, so \eqref{eq:W-prime-general} shows that $W_\Phi$ is
nondecreasing.
\end{proof}

\section{Positive self-similar adjoint functions}\label{sec:hypergeom}

In this section, we construct a family of positive self-similar solutions
of the homogeneous adjoint equation in terms of the Gauss hypergeometric
function. These functions are adapted to the geometry of the propagation
region and will provide the weights used in the critical blow-up argument.
We then identify the profile corresponding to the critical exponent and
establish the properties of this profile, including the interior and borderline estimates.

Let us introduce the normalized radial variable
\begin{equation}\label{defofy}
    y=y(t,x)
    :=
    \frac{(1+\ell)\abs{x}}{(1+t)^{1+\ell}}.
\end{equation}
The level set $y=1$ determines the boundary of the open self-similar cone
\begin{equation}\label{eq:selfsimilar-cone}
    \mathcal C_\ell
    :=
    \big\{
        (t,x)\in(-1,\infty)\times\R^n:
        y(t,x)<1
    \big\},
\end{equation}
whose relevant portion is
\[
    \mathcal C_\ell^+
    :=
    \mathcal C_\ell
    \cap
    \bigl([0,\infty)\times\R^n\bigr).
\]
Thus, $0\leq y<1$ in the interior of $\mathcal C_\ell$.

If $u$ is a weak solution to \eqref{eq:main-p} in the sense of Definition \ref{def:solution}, then the propagation region of $u$ lies strictly inside $\mathcal C_\ell^+$. Indeed, setting
\[
    d_R:=1-(1+\ell)R>0,
\]
the finite propagation property \eqref{eq:finite-propagation} yields, for
$x\in\supp u(t,\cdot)$,
\begin{equation}\label{eq:cone-gap}
\begin{aligned}
    y(t,x)
    =
    \frac{(1+\ell)\abs{x}}{(1+t)^{1+\ell}}
    \leq
    \frac{(1+\ell)R+(1+t)^{1+\ell}-1}
         {(1+t)^{1+\ell}}
    =
    1-\frac{d_R}{(1+t)^{1+\ell}}.
\end{aligned}
\end{equation}
Equivalently, the spatial gap between the self-similar cone and the
propagation region is constant:
\[
    \frac{(1+t)^{1+\ell}}{1+\ell}
    -
    \bigl(R+A(t)\bigr)
    =
    \frac{1}{1+\ell}-R>0.
\]

For $\beta\in\R$, define
\begin{equation}\label{eq:hyp-parameters}
    a_\beta
    :=
    \frac{2\beta-1}{4(1+\ell)}
    +
    \frac{i\omega}{2(1+\ell)},
    \qquad
    b_\beta
    :=
    \frac{2\beta-1}{4(1+\ell)}
    -
    \frac{i\omega}{2(1+\ell)},
    \qquad
    c:=\frac n2,
\end{equation}
and
\begin{equation}\label{eq:Psi-def}
    \Psi_\beta(z)
    :=
    {}_2F_1(a_\beta,b_\beta;c;z),
    \qquad 0\leq z<1.
\end{equation}
Since $b_\beta=\overline{a_\beta}$ and $c>0$, the coefficients of the
hypergeometric series satisfy
\[
    \frac{(a_\beta)_k(b_\beta)_k}{(c)_k\,k!}
    =
    \frac{\abs{(a_\beta)_k}^2}{(c)_k\,k!}
    \geq 0.
\]
In particular, $\Psi_\beta$ is real-valued and positive on $[0,1)$.

We now define
\begin{equation}\label{eq:Phi-def}
    \Phi_\beta(t,x)
    :=
    (1+t)^{1-\beta}
    \Psi_\beta\bigl(y(t,x)^2\bigr),
    \qquad (t,x)\in\mathcal C_\ell.
\end{equation}
The dependence on $y^2$ guarantees smoothness at $x=0$, whereas the
factor $(1+t)^{1-\beta}$ allows us to tune the behavior of
$\Phi_\beta$ near the boundary $y=1$. The parameters in \eqref{eq:hyp-parameters} are chosen so that
$\Psi_\beta$ satisfies the hypergeometric equation obtained from the
homogeneous adjoint equation under the scaling \eqref{eq:Phi-def}. The next lemma makes this reduction precise.

\begin{lemma}[Adjoint equation]\label{lem:adjoint}
For every $\beta\in\R$, the function $\Phi_\beta$ defined in \eqref{eq:Phi-def} is real-valued, belongs
to $\mathrm{C}^\infty(\mathcal C_\ell)$, and satisfies
\begin{equation}\label{eq:adjoint}
    \partial_t^2\Phi_\beta
    -
    a(t)^2\Delta\Phi_\beta
    +
    V(t)\Phi_\beta
    =
    0
    \qquad\text{in }\mathcal C_\ell.
\end{equation}
\end{lemma}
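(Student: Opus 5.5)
Write $\Phi_\beta=(1+t)^{1-\beta}\Psi(z)$ with $z=y^2$, $\Psi=\Psi_\beta$. Real-valuedness and positivity were noted above, since $b_\beta=\overline{a_\beta}$. Smoothness in $\mathcal C_\ell$ follows from three facts: $z=(1+\ell)^2|x|^2(1+t)^{-2(1+\ell)}$ is a smooth function of $(t,x)$ for $t>-1$ (it depends on $|x|^2$, not $|x|$); $0\le z<1$ there; and ${}_2F_1(a,b;c;\cdot)$ is real-analytic on $(-1,1)$ because $c=n/2>0$ is not a nonpositive integer. The substance of the lemma is therefore the computation that $\Lop\Phi_\beta=0$ reduces to the hypergeometric equation
\[
z(1-z)\Psi''+\bigl[c-(a_\beta+b_\beta+1)z\bigr]\Psi'-a_\beta b_\beta\Psi=0.
\]
The plan is to compute $\partial_t^2\Phi_\beta$ and $\Delta\Phi_\beta$ in terms of $\Psi,\Psi',\Psi''$ evaluated at $z$, factor out $(1+t)^{-1-\beta}$, and match coefficients.

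For the time derivatives, $z_t=-2(1+\ell)(1+t)^{-1}z$. Set $s=1-\beta$ and $\kappa=2(1+\ell)$, and put $D=(1+t)\partial_t$. Then $D$ acts on functions of $z$ as $-\kappa z\,\partial_z$, and $(1+t)^2\partial_t^2=D^2-D$. With $\theta:=z\partial_z$ this gives
\[
(1+t)^{2}\partial_t^2\Phi_\beta=(1+t)^{s}\bigl[(s-\kappa\theta)(s-1-\kappa\theta)\Psi\bigr].
\]
For the spatial part, $\Psi(z)$ is radial with $z=\alpha r^2$, where $\alpha=(1+\ell)^2(1+t)^{-\kappa}$. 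The standard formula $\Delta\bigl(F(\alpha r^2)\bigr)=4\alpha\bigl(zF''+\tfrac n2F'\bigr)$ applies, and
\[
a(t)^2\cdot 4\alpha=4(1+\ell)^2(1+t)^{-2}.
\]
Hence
\[
(1+t)^{2}a^2\Delta\Phi_\beta=(1+t)^{s}\,4(1+\ell)^2\bigl(z\Psi''+\tfrac n2\Psi'\bigr).
\]
Note that $(1+t)^2V=\mu^2$. Multiplying $\Lop\Phi_\beta$ by $(1+t)^{2-s}$ therefore yields the time-independent ODE
\[
\kappa^2\theta^2\Psi-\kappa(2s-1)\theta\Psi+s(s-1)\Psi-\kappa^2\bigl(z\Psi''+\tfrac n2\Psi'\bigr)+\mu^2\Psi=0.
\]
This uses $\kappa^2=4(1+\ell)^2$, so the Laplacian coefficient equals $\kappa^2$; this is exactly why the self-similar variable was normalized with the factor $1+\ell$.

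Next I expand $\theta^2\Psi=z^2\Psi''+z\Psi'$ and divide by $-\kappa^2$. The $\Psi''$ terms combine to $z(1-z)\Psi''$. The $\Psi'$ terms give
\[
\Bigl[\tfrac n2-\Bigl(1-\tfrac{2s-1}{\kappa}\Bigr)z\Bigr]\Psi'.
\]
The zeroth order term is $-\bigl(s(s-1)+\mu^2\bigr)\kappa^{-2}\Psi$. Two identities are then needed. First, $a_\beta+b_\beta=\frac{2\beta-1}{2(1+\ell)}=\frac{1-2s}{\kappa}$, so $a_\beta+b_\beta+1=1-\frac{2s-1}{\kappa}$, which matches the $\Psi'$ coefficient. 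Second, $a_\beta b_\beta=|a_\beta|^2=\frac{(2\beta-1)^2/4+\omega^2}{\kappa^2}$. Using $\omega^2=\mu^2-\frac14$ and $s=1-\beta$,
\[
\tfrac{(2\beta-1)^2}{4}+\mu^2-\tfrac14=\beta^2-\beta+\mu^2=s(s-1)+\mu^2,
\]
which matches the zeroth order term. With $c=n/2$ the equation is exactly the hypergeometric equation, which ${}_2F_1(a_\beta,b_\beta;c;\cdot)$ satisfies on $[0,1)$. This proves \eqref{eq:adjoint}.

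The main obstacle is bookkeeping rather than any conceptual difficulty. The delicate step is tracking the mixed term $2s\cdot(-\kappa\theta)$ together with the $-D$ correction in $(1+t)^2\partial_t^2=D^2-D$ without sign errors. I would double-check it in the case $\ell=0$, where the answer must reduce to the known wave-equation self-similar solutions. Smoothness at $x=0$ needs no separate argument, since $\Psi$ is composed with the smooth function $|x|^2$.
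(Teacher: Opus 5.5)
Your proposal is correct and follows essentially the same route as the paper: you express $\partial_t^2\Phi_\beta$ and $\Delta\Phi_\beta$ through $\Psi_\beta,\Psi_\beta',\Psi_\beta''$, factor out the common power of $1+t$, and match the resulting ODE with the hypergeometric equation via $a_\beta+b_\beta+1=\frac{2\beta+2(1+\ell)-1}{2(1+\ell)}$ and $a_\beta b_\beta=\frac{\beta(\beta-1)+\mu^2}{4(1+\ell)^2}$. Your Euler-operator bookkeeping with $D=(1+t)\partial_t$ and $\theta=z\partial_z$ is a tidier organization of the paper's direct differentiation, and the coefficients it produces agree with the paper's formulas; for instance, the coefficient of $z\Psi_\beta'$ is $\kappa(\kappa-2s+1)=2(1+\ell)(2\beta+2(1+\ell)-1)$.
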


\begin{proof}
Put $s:=1+t$, $r:=\abs{x}$, and
\[
    z:=\frac{(1+\ell)^2r^2}{s^{2(1+\ell)}}.
\]
Since $\Psi_\beta$ is analytic for $\abs{z}<1$, its composition with
$r^2$ is smooth at $r=0$. Moreover, $b_\beta=\overline{a_\beta}$, so
the defining power series shows that $\Psi_\beta$, and hence
$\Phi_\beta$, is real-valued for $0\leq z<1$.

Direct differentiation gives
\begin{equation}\label{eq:Phi-t-formula}
    \partial_t\Phi_\beta
    =
    s^{-\beta}
    \left[
        (1-\beta)\Psi_\beta(z)
        -
        2(1+\ell)z\Psi_\beta'(z)
    \right],
\end{equation}
\begin{equation}\label{eq:Phi-tt-formula}
    \partial_t^2\Phi_\beta
    =
    s^{-1-\beta}
    \left[
        \beta(\beta-1)\Psi_\beta(z)
        +
        2(1+\ell)
        \bigl(2\beta+2(1+\ell)-1\bigr)
        z\Psi_\beta'(z)
        +
        4(1+\ell)^2z^2\Psi_\beta''(z)
    \right],
\end{equation}
and
\begin{equation}\label{eq:laplacian-formula}
    \Delta\Phi_\beta
    =
    (1+\ell)^2
    s^{1-\beta-2(1+\ell)}
    \left[
        4z\Psi_\beta''(z)
        +
        2n\Psi_\beta'(z)
    \right].
\end{equation}
Since $a(t)^2=s^{2\ell}$ and $V(t)=\mu^2s^{-2}$, substitution into
\eqref{eq:adjoint} shows that it is equivalent to
\begin{equation}\label{eq:hyp-ode}
    z(1-z)\Psi_\beta''
    +
    \left[
        \frac n2
        -
        \frac{2\beta+2(1+\ell)-1}{2(1+\ell)}z
    \right]\Psi_\beta'
    -
    \frac{\beta(\beta-1)+\mu^2}{4(1+\ell)^2}
    \Psi_\beta
    =
    0.
\end{equation}
The parameters in \eqref{eq:hyp-parameters} satisfy
\[
    a_\beta+b_\beta+1
    =
    \frac{2\beta+2(1+\ell)-1}{2(1+\ell)}
\]
and
\[
    a_\beta b_\beta
    =
    \frac{(2\beta-1)^2}{16(1+\ell)^2}
    +
    \frac{\omega^2}{4(1+\ell)^2}
    =
    \frac{\beta(\beta-1)+\mu^2}{4(1+\ell)^2}.
\]
Therefore \eqref{eq:hyp-ode} is precisely the hypergeometric equation
satisfied by ${}_2F_1(a_\beta,b_\beta;c;z)$.
\end{proof}

The adjoint equation alone is not sufficient for the critical argument. We
also need positivity and boundedness of the profile, together with a precise
estimate of its time derivative near the boundary of the self-similar
region. These properties are established in the following lemma.

\begin{lemma}[Positivity and boundary behavior]
\label{lem:hyp-bounds}
Let $\beta$ satisfy
\begin{equation}\label{eq:beta-general-range}
    \frac{(1+\ell)(n-2)+1}{2}
    <
    \beta
    <
    \frac{(1+\ell)n+1}{2},
\end{equation}
and set
\begin{equation}\label{eq:delta-beta}
    \delta_\beta
    :=
    c-a_\beta-b_\beta
    =
    \frac{(1+\ell)n+1-2\beta}{2(1+\ell)}
    \in(0,1).
\end{equation}
Then the following statements hold:
\begin{enumerate}[label=$(\roman*)$]

\item
The function $\Psi_\beta$ defined in \eqref{eq:Psi-def} is positive and
strictly increasing on $[0,1)$. It extends continuously to $z=1$ and
satisfies
\begin{equation}\label{eq:Psi-bounds}
    1
    \leq
    \Psi_\beta(z)
    \leq
    \Psi_\beta(1)
    =
    \frac{\Gamma(c)\Gamma(\delta_\beta)}
    {\abs{\Gamma(c-a_\beta)}^2},
    \qquad 0\leq z<1.
\end{equation}

\item
The derivative of $\Psi_\beta$ satisfies
\begin{equation}\label{eq:Psi-prime-asymptotic}
    \lim_{z\to1^-}
    (1-z)^{1-\delta_\beta}\Psi_\beta'(z)
    =
    \frac{\Gamma(c)\Gamma(1-\delta_\beta)}
    {\abs{\Gamma(a_\beta)}^2}
    >0.
\end{equation}
Consequently, there exist constants $0<c_\beta\leq C_\beta$ such that
\begin{equation}\label{eq:Psi-prime-bounds}
    c_\beta(1-z)^{\delta_\beta-1}
    \leq
    \Psi_\beta'(z)
    \leq
    C_\beta(1-z)^{\delta_\beta-1},
    \qquad 0\leq z<1.
\end{equation}

\item
Throughout $\mathcal C_\ell^+$, the self-similar adjoint function satisfies
\begin{equation}\label{eq:Phi-comparison-general}
    (1+t)^{1-\beta}
    \leq
    \Phi_\beta(t,x)
    \leq
    C_\beta(1+t)^{1-\beta}.
\end{equation}

\item
Throughout $\mathcal C_\ell^+$, its time derivative satisfies
\begin{equation}\label{eq:Phi-t-general}
    \abs{\partial_t\Phi_\beta(t,x)}
    \leq
    C_\beta(1+t)^{-\beta}
    \left(
        1-
        \frac{(1+\ell)\abs{x}}{(1+t)^{1+\ell}}
    \right)^{\delta_\beta-1}.
\end{equation}

\end{enumerate}
\end{lemma}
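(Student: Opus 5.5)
The proof rests on the power series of ${}_2F_1(a_\beta,b_\beta;c;z)$ together with the classical Gauss summation and connection formulas. First I check \eqref{eq:delta-beta}. Since $a_\beta+b_\beta=\frac{2\beta-1}{2(1+\ell)}$, we get $c-a_\beta-b_\beta=\frac{(1+\ell)n+1-2\beta}{2(1+\ell)}$. The two inequalities in \eqref{eq:beta-general-range} then translate exactly into $0<\delta_\beta<1$.

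For (i), the series coefficients $\abs{(a_\beta)_k}^2/((c)_k k!)$ are nonnegative, and the $k=1$ coefficient is strictly positive because $a_\beta\neq0$ (here $\omega>0$). Hence $\Psi_\beta(0)=1$ and $\Psi_\beta$ is strictly increasing on $[0,1)$. Since $\operatorname{Re}(c-a_\beta-b_\beta)=\delta_\beta>0$, Gauss's theorem gives convergence of the series at $z=1$. By Abel's theorem, $\Psi_\beta$ then extends continuously to $z=1$ with
\[
\Psi_\beta(1)=\frac{\Gamma(c)\Gamma(\delta_\beta)}{\Gamma(c-a_\beta)\Gamma(c-b_\beta)}.
\]
The relation $c-b_\beta=\overline{c-a_\beta}$ turns the denominator into $\abs{\Gamma(c-a_\beta)}^2$, and monotonicity yields \eqref{eq:Psi-bounds}.

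For (ii), I use $\Psi_\beta'(z)=\frac{a_\beta b_\beta}{c}\,{}_2F_1(a_\beta+1,b_\beta+1;c+1;z)$. The new parameter excess is $c+1-a_\beta-b_\beta-2=\delta_\beta-1\in(-1,0)$. Euler's transformation gives
\[
{}_2F_1(a+1,b+1;c+1;z)=(1-z)^{\delta_\beta-1}\,{}_2F_1(c-a_\beta,c-b_\beta;c+1;z),
\]
and the last function has excess $1-\delta_\beta>0$. So Gauss's formula applies again at $z=1$ and gives the limit
\[
\frac{\abs{a_\beta}^2}{c}\cdot\frac{\Gamma(c+1)\Gamma(1-\delta_\beta)}{\abs{\Gamma(a_\beta+1)}^2}=\frac{\Gamma(c)\Gamma(1-\delta_\beta)}{\abs{\Gamma(a_\beta)}^2},
\]
using $\Gamma(a+1)=a\Gamma(a)$. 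This proves \eqref{eq:Psi-prime-asymptotic}.

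For the two-sided bound \eqref{eq:Psi-prime-bounds}, observe that $(1-z)^{1-\delta_\beta}\Psi_\beta'(z)=\frac{\abs{a_\beta}^2}{c}\,{}_2F_1(c-a_\beta,c-b_\beta;c+1;z)$. This function is continuous and strictly positive on $[0,1]$: its series again has coefficients of the form $\abs{(\cdot)_k}^2/((c+1)_k k!)\ge0$, with constant term $1$. It therefore has a positive minimum and a finite maximum on $[0,1]$. This positivity of the Euler-transformed series is the crucial point. Without it one would only know positivity near $1$ and at $0$, and an extra compactness argument would be needed.

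For (iii), I substitute $z=y^2\in[0,1)$ into \eqref{eq:Psi-bounds}, which gives \eqref{eq:Phi-comparison-general} with $C_\beta\ge\Psi_\beta(1)$.

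For (iv), I start from \eqref{eq:Phi-t-formula}, which gives
\[
\abs{\partial_t\Phi_\beta}\le(1+t)^{-\beta}\bigl(\abs{1-\beta}\Psi_\beta(1)+2(1+\ell)C_\beta(1-y^2)^{\delta_\beta-1}\bigr),
\]
using $z\le1$. Since $1-y^2=(1-y)(1+y)\ge1-y$ and $\delta_\beta-1<0$, we have $(1-y^2)^{\delta_\beta-1}\le(1-y)^{\delta_\beta-1}$. Also $(1-y)^{\delta_\beta-1}\ge1$, so the bounded first term is absorbed. This gives \eqref{eq:Phi-t-general}.

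The main obstacle is the precise bookkeeping in (ii): correct use of the derivative formula, Euler's transformation, and Gauss summation with complex conjugate parameters, so that the constant comes out as stated.
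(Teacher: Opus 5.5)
Your proof is correct and follows the paper's argument in all four parts. The only deviation is in (ii): you pass through Euler's transformation to write $(1-z)^{1-\delta_\beta}\Psi_\beta'(z)=\frac{\abs{a_\beta}^2}{c}\,{}_2F_1(c-a_\beta,c-b_\beta;c+1;z)$, a series with nonnegative coefficients, which gives the limit by Gauss summation and the two-sided bound with explicit constants, whereas the paper quotes the boundary formula \cite[Eq.~15.4.23]{NIST} and concludes by continuity of the extended function on $[0,1]$.

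One small correction to your commentary. Without the Euler-transformed series you would not know positivity only near $1$ and at $0$. The original series already gives $\Psi_\beta'(z)\ge\abs{a_\beta}^2/c>0$ on all of $[0,1)$, and together with the positive limit at $z=1$ this is exactly what makes the paper's compactness argument work. So your route is a convenience that yields explicit constants, not a necessity.
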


\begin{proof}
Since $b_\beta=\overline{a_\beta}$ and $c=n/2>0$, the defining series can
be written as
\begin{equation}\label{eq:positive-series}
    \Psi_\beta(z)
    =
    \sum_{j=0}^{\infty}
    \frac{(a_\beta)_j(b_\beta)_j}
    {(c)_j\,j!}
    z^j
    =
    \sum_{j=0}^{\infty}
    \frac{\abs{(a_\beta)_j}^2}
    {(c)_j\,j!}
    z^j.
\end{equation}
All coefficients are nonnegative, the constant coefficient is one, and the
coefficient of $z$ is $\abs{a_\beta}^2/c>0$. Therefore, for $0\leq z<1$,
\[
    \Psi_\beta(z)\geq1
    \quad \text{and} \quad
    \Psi_\beta'(z)>0.
\]

\noindent Since $\delta_\beta>0$, Gauss' summation formula
\cite[Eq.~15.4.20]{NIST} gives
\begin{equation}\label{eq:Gauss-sum}
    \Psi_\beta(1)
    =
    \frac{\Gamma(c)\Gamma(\delta_\beta)}
    {\Gamma(c-a_\beta)\Gamma(c-b_\beta)}
    =
    \frac{\Gamma(c)\Gamma(\delta_\beta)}
    {\abs{\Gamma(c-a_\beta)}^2}.
\end{equation}
Together with monotonicity, this proves \eqref{eq:Psi-bounds}.

\smallskip

By the differentiation formula \cite[Eq.~15.5.1]{NIST}, we have
\[
    \Psi_\beta'(z)
    =
    \frac{a_\beta b_\beta}{c} \, 
    {}_2F_1(a_\beta+1,b_\beta+1;c+1;z).
\] Now, let us temporarily set $A:=a_\beta+1$, $B:=b_\beta+1$, and $C:=c+1$; then,
\[
    A+B-C=1-\delta_\beta\in(0,1),
\] so that the boundary formula \cite[Eq.~15.4.23]{NIST} yields
\[
    \lim_{z\to 1^{-}}
    (1-z)^{1-\delta_\beta}\,
    {}_2F_1(A,B;C;z)
    =
    \frac{\Gamma(C)\Gamma(1-\delta_\beta)}
    {\Gamma(A)\Gamma(B)}.
\] Back to our original notation,
we have
\[
    \lim_{z\to 1^{-}}
    (1-z)^{1-\delta_\beta}\Psi_\beta'(z)
    =
    \frac{\Gamma(c)\Gamma(1-\delta_\beta)}
    {\Gamma(a_\beta)\Gamma(b_\beta)}
    =
    \frac{\Gamma(c)\Gamma(1-\delta_\beta)}
    {\abs{\Gamma(a_\beta)}^2},
\] which is \eqref{eq:Psi-prime-asymptotic}.

\smallskip

Next, the function $z \longmapsto (1-z)^{1-\delta_\beta}\Psi_\beta'(z)$ is positive and continuous on $[0,1)$, and
\eqref{eq:Psi-prime-asymptotic} gives a positive finite limit at $z=1$.
It therefore extends to a positive continuous function on $[0,1]$, which
readily implies \eqref{eq:Psi-prime-bounds}.

\smallskip

Estimate \eqref{eq:Phi-comparison-general} follows immediately from
\eqref{eq:Phi-def} and \eqref{eq:Psi-bounds}. Moreover,
\eqref{eq:Phi-t-formula}, \eqref{eq:Psi-bounds}, and
\eqref{eq:Psi-prime-bounds} give
\[
    \abs{\partial_t\Phi_\beta(t,x)}
    \leq
    C_\beta(1+t)^{-\beta}
    (1-z)^{\delta_\beta-1},
\] after possibly redefining the constant $C_\beta$, since $\delta_\beta - 1 < 0$. Next, observe that from the definition of $y$ in \eqref{defofy} and the relation $z=y^2$,
we have \[1-z=(1-y)(1+y).\] Since $1\leq1+y<2$ and $\delta_\beta-1<0$, we obtain
\[
    (1-z)^{\delta_\beta-1}
    \leq
    (1-y)^{\delta_\beta-1}.
\]
This proves \eqref{eq:Phi-t-general}.
\end{proof}

We now choose the parameter $\beta$ as a function of the exponent $p$. At the critical value $p=p_*$, this choice yields the identities that allow us to obtain the logarithmic estimates.

\begin{proposition}\label{prop12}
Let $-1<\ell\leq 0$, assume that $(1+\ell)n>1$, and let $p>1$.
Define
\begin{equation}\label{eq:beta-sigma-critical}
 \beta_p:=\frac{(1+\ell)n+1}{2}-\frac{1+\ell}{p},
 \qquad
 \sigma_p:=(1+\ell)(n-1)
 -\frac{\bigl((1+\ell)n-1\bigr)p}{2}.
\end{equation}
Then
\begin{equation}\label{eq:beta-general-range2}
 \frac{(1+\ell)(n-2)+1}{2}
 <\beta_p<
 \frac{(1+\ell)n+1}{2}.
\end{equation}
Moreover, if $p=\pc(\ell,n)$ is the critical exponent from \eqref{eq:critical-root},
then $\beta_p>1$ and
\begin{equation}\label{eq:critical-id-one}
 2-\beta_p+\sigma_p=0,
\end{equation}
as well as
\begin{equation}\label{eq:critical-id-two}
 \frac{(1+\ell)n+1-\beta_p}{p'}=1+\frac1p,
 \qquad p':=\frac{p}{p-1}.
\end{equation}
\end{proposition}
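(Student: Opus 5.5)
This is pure algebra, so the plan is to verify the three claims directly from the definitions in \eqref{eq:beta-sigma-critical}, using only $p>1$, $-1<\ell\le 0$, and the relation $\gamma(n,\ell;\pc)=0$.

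\emph{Range \eqref{eq:beta-general-range2}.} The upper bound $\beta_p<\frac{(1+\ell)n+1}{2}$ holds because $\frac{1+\ell}{p}>0$. For the lower bound, observe that
\[
\beta_p-\frac{(1+\ell)(n-2)+1}{2}=(1+\ell)\Bigl(1-\frac1p\Bigr),
\]
which is positive since $p>1$. Equivalently, $\delta_{\beta_p}=1/p\in(0,1)$, so $\beta_p$ lies in the admissible range of \Cref{lem:hyp-bounds}.

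\emph{Identity \eqref{eq:critical-id-one}.} Compute $2-\beta_p+\sigma_p$ explicitly and multiply by $2p$ (nonzero). Write $N:=(1+\ell)n$. Then
\[
2p\bigl(2-\beta_p+\sigma_p\bigr)=4p-(N+1)p+2(1+\ell)+2\bigl(N-(1+\ell)\bigr)p-(N-1)p^2 .
\]
Collecting terms gives $-(N-1)p^2+(N+1-2\ell)p+2(1+\ell)=-\gamma(n,\ell;p)$. This vanishes at $p=\pc$ by the definition in \ref{def:critical-exponent}. The only real care needed in the whole proof is this coefficient bookkeeping.

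\emph{Identity \eqref{eq:critical-id-two}.} We have $N+1-\beta_p=\frac{N+1}{2}+\frac{1+\ell}{p}$. Multiplying by $\frac{1}{p'}=\frac{p-1}{p}$ and requiring equality with $\frac{p+1}{p}$ is equivalent, after multiplying by $2p^2$, to
\[
(p-1)\bigl((N+1)p+2(1+\ell)\bigr)=2p(p+1).
\]
Expanding gives $(N-1)p^2-(N+1-2\ell)p-2(1+\ell)=0$, which is again $\gamma(n,\ell;p)=0$. So \eqref{eq:critical-id-two} holds exactly at $p=\pc$.

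\emph{Claim $\beta_{\pc}>1$.} Use \eqref{eq:critical-id-one} to write $\beta_{\pc}=2+\sigma_{\pc}$. It then suffices to show
\[
\sigma_{\pc}>-1,\qquad\text{i.e.}\qquad \pc<\frac{2\bigl((1+\ell)(n-1)+1\bigr)}{N-1}=:q .
\]
Since $N-1>0$, $\gamma(n,\ell;\cdot)$ is negative on $(0,\pc)$ and positive beyond $\pc$. Hence $\pc<q$ follows once $\gamma(n,\ell;q)>0$.

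An alternative is to check $\beta_p>1$ directly: it is equivalent to $p(N-1)>2(1+\ell)$. This follows from
\[
\gamma\Bigl(\frac{2(1+\ell)}{N-1}\Bigr)<0,
\]
and that sign can be checked by substituting the point into $\gamma$. This second route is easier, so I would take it.

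The main, if modest, obstacle is this final sign check. If the direct substitution turns out messy, I would instead use $\pc>1$ together with the quadratic's sign structure. Concretely, I would argue that $\beta_p$ is increasing in $p$ and that $\gamma<0$ at the value of $p$ where $\beta_p=1$.
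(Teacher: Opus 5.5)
Your proposal is correct. The range check and both identities coincide with the paper's proof: the paper records exactly $2-\beta_p+\sigma_p=-\gamma(n,\ell;p)/(2p)$ and $\frac{(1+\ell)n+1-\beta_p}{p'}-1-\frac1p=\gamma(n,\ell;p)/(2p^2)$, and your coefficient bookkeeping is right. The only divergence is the claim $\beta_{p_*}>1$. There you leave the decisive sign check undone, but it goes through. With $N=(1+\ell)n$ and $p_0=\frac{2(1+\ell)}{N-1}$,
\[
\gamma(n,\ell;p_0)=\frac{2(1+\ell)}{N-1}\bigl(2(1+\ell)-(N+1-2\ell)-(N-1)\bigr)=-\frac{4(1+\ell)(N-1-2\ell)}{N-1}<0,
\]
since $N>1$ and $\ell\le0$. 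Among positive $p$, $\gamma(n,\ell;\cdot)$ is negative exactly on $(0,p_*)$, so this gives $p_*>p_0$, i.e.\ $\beta_{p_*}>1$. Your first route also works, since $\gamma(n,\ell;q)=2(N-1-2\ell)>0$. Your fallback ``use $p_*>1$'' would not suffice on its own, because $p_0$ can exceed $1$ (for $n=2$, $\ell=0$ one has $p_0=2$), so the substitution is genuinely needed.

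The paper never locates a threshold. It substitutes $\gamma(n,\ell;p_*)=0$ to eliminate $p_*^2$:
\[
2p_*^2(\beta_{p_*}-1)=(N-1)p_*^2-2(1+\ell)p_*=(N-1-4\ell)p_*+2(1+\ell),
\]
which is visibly positive. (The paper invokes $n\ge2$ at this point, though $N>1$ and $\ell\le0$ already suffice.) That argument is a one-line identity using only the equation satisfied by $p_*$, with no appeal to the sign structure of the quadratic. Your route costs a root-location argument and an evaluation of $\gamma$ at a rational point. In exchange it yields the explicit threshold $p_0$, showing $\beta_p>1$ for every $p>p_0$, not only at $p=p_*$.
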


\begin{proof}
For every $p>1$, the definition of $\beta_p$ gives
\[
    \frac{(1+\ell)n+1}{2}-\beta_p
    =
    \frac{1+\ell}{p}
    >0
\]
and
\[
    \beta_p-\frac{(1+\ell)(n-2)+1}{2}
    =
    (1+\ell)\left(1-\frac{1}{p}\right)
    >0.
\]
This proves \eqref{eq:beta-general-range2}.

Assume now that $p=\pc$. By \Cref{def:critical-exponent}, $\gamma(n,\ell;p)=0$, and therefore
\[
    \bigl((1+\ell)n-1\bigr)p^2
    =
    \bigl((1+\ell)n+1-2\ell\bigr)p
    +
    2(1+\ell).
\]
Using this identity, we obtain
\begin{align*}
    2p^2(\beta_p-1)
    &=
    \bigl((1+\ell)n-1\bigr)p^2
    -
    2(1+\ell)p
    \\
    &=
    \bigl((1+\ell)n-4(1+\ell)+3\bigr)p
    +
    2(1+\ell).
\end{align*}
Moreover, $(1+\ell)n>1$ implies $n\geq2$, and hence
\[
    (1+\ell)n-4(1+\ell)+3
    \geq
    3-2(1+\ell)
    =
    1-2\ell
    >0.
\]
Thus $\beta_p>1$.

Finally, direct computations using \eqref{eq:beta-sigma-critical} yield
\[
    2-\beta_p+\sigma_p
    =
    -\frac{\gamma(n,\ell;p)}{2p}
\]
and
\[
    \frac{(1+\ell)n+1-\beta_p}{p'}
    -
    1-\frac{1}{p}
    =
    \frac{\gamma(n,\ell;p)}{2p^2}.
\]
Since $\gamma(n,\ell;p)=0$ for $p=\pc$, these relations give
\eqref{eq:critical-id-one} and \eqref{eq:critical-id-two}.
\end{proof}

We now specialize the self-similar family to the critical exponent $p=p_*$. The resulting profile, recorded in the following corollary, will be used as the weight in the nonlinear moment from which the differential inequality underlying the ODE comparison argument is derived.

\begin{corollary}[Critical self-similar adjoint]\label{cor:critical-adjoint}
Let $p=\pc(n,\ell)$ be given by \eqref{eq:critical-root} and define
\begin{equation}\label{eq:Phi-p-def}
    \Phi_*:=\Phi_{\beta_{p_*}}.
\end{equation}
Then, throughout $\mathcal C_\ell^+$,
\begin{equation}\label{eq:Phi-p-comparison}
    (1+t)^{1-\beta_{p_*}}
    \leq
    \Phi_*(t,x)
    \leq
    C(1+t)^{1-\beta_{p_*}},
\end{equation}
and
\begin{equation}\label{eq:Phi-p-t}
    0
    <
    -\partial_t\Phi_*(t,x)
    \leq
    C(1+t)^{-\beta_{p_*}}
    \left(
        1-
        \frac{(1+\ell)\abs{x}}{(1+t)^{1+\ell}}
    \right)^{-1/{p'_*}}.
\end{equation}
In particular,
\begin{equation}\label{eq:initial-sign-Phi}
    \Phi_*(0,x)>0
    \quad \text{and} \quad
    -\partial_t\Phi_*(0,x)>0,
    \quad\text{for }
    \abs{x}<(1+\ell)^{-1}.
\end{equation}
\end{corollary}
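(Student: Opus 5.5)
Since $\beta_{p_*}$ satisfies the admissible range \eqref{eq:beta-general-range2} by \Cref{prop12}, we may apply \Cref{lem:hyp-bounds} with $\beta=\beta_{p_*}$. Part (iii) of that lemma gives \eqref{eq:Phi-p-comparison} at once. For the time derivative we first compute $\delta_{\beta_{p_*}}$ from \eqref{eq:delta-beta}:
\[
\delta_{\beta_{p_*}}=\frac{(1+\ell)n+1-2\beta_{p_*}}{2(1+\ell)}=\frac{2(1+\ell)/p_*}{2(1+\ell)}=\frac1{p_*},
\]
so $\delta_{\beta_{p_*}}-1=-1/p_*'$. Substituting this exponent into \eqref{eq:Phi-t-general} yields the upper bound in \eqref{eq:Phi-p-t}.

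The remaining point is the strict sign $-\partial_t\Phi_*>0$. We use \eqref{eq:Phi-t-formula}:
\[
-\partial_t\Phi_* = s^{-\beta_{p_*}}\bigl[(\beta_{p_*}-1)\Psi(z)+2(1+\ell)z\Psi'(z)\bigr].
\]
By \Cref{prop12} we have $\beta_{p_*}>1$. By \Cref{lem:hyp-bounds}(i) we have $\Psi\ge1$ and $\Psi'>0$, and also $z\ge0$ and $1+\ell>0$. Hence the bracket is at least $\beta_{p_*}-1>0$ throughout $\mathcal C_\ell^+$. This is the only place where the critical identity, through $\beta_{p_*}>1$, is genuinely needed; it is the main point of the corollary, though it is easy given \Cref{prop12}.

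Finally, \eqref{eq:initial-sign-Phi} follows by specializing to $t=0$. There $y=(1+\ell)|x|<1$ exactly when $|x|<(1+\ell)^{-1}$, so $(0,x)\in\mathcal C_\ell^+$. Positivity of $\Phi_*$ then comes from the lower bound in \eqref{eq:Phi-p-comparison}, and positivity of $-\partial_t\Phi_*$ from the sign argument above.
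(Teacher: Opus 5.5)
Your proposal is correct and follows essentially the same route as the paper: apply \Cref{lem:hyp-bounds} with $\beta=\beta_{p_*}$ (admissible by \Cref{prop12}), compute $\delta_{\beta_{p_*}}=1/p_*$ to obtain the exponent $-1/p_*'$, and use \eqref{eq:Phi-t-formula} together with $\beta_{p_*}>1$, $\Psi\ge1$ and $\Psi'\ge0$ to get the strict sign. Your explicit check that $(0,x)\in\mathcal C_\ell^+$ exactly when $|x|<(1+\ell)^{-1}$ is a small detail the paper leaves implicit.
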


\begin{proof}
By the definition of $\beta_p$ in \eqref{eq:beta-sigma-critical},
\[
    \delta_{\beta_{p_*}}
    =
    \frac{(1+\ell)n+1-2\beta_{p_*}}
    {2(1+\ell)}
    =
    \frac1{p_*}
\quad
\text{and}
\quad
    \delta_{\beta_{p_*}}-1
    =
    -\frac1{p'_*}.
\]
The estimates in \eqref{eq:Phi-p-comparison} and the upper bound in
\eqref{eq:Phi-p-t} follow from \Cref{lem:hyp-bounds}, once $\beta_p$ satisfies \eqref{eq:beta-general-range}.

Finally, $\beta_p>1$ by \Cref{prop12}. Therefore,
\eqref{eq:Phi-t-formula} gives
\[
    -\partial_t\Phi_*(t,x)
    =
    (1+t)^{-\beta_{p_*}}
    \left[
        (\beta_{p_*}-1)\Psi_{\beta_{p_*}}(z)
        +
        2(1+\ell)z\Psi_{\beta_{p_*}}'(z)
    \right]
    >0
\]
throughout $\mathcal C_\ell^+$. This also proves
\eqref{eq:initial-sign-Phi}.
\end{proof}

The exponent in \eqref{eq:Phi-p-t} is exactly at the borderline of
integrability. After raising the boundary factor to the power $p_*'$, the
resulting singularity is logarithmic. The separation provided by
\eqref{eq:cone-gap} controls this singularity on the propagation region,
leading to the logarithmic estimate established in the next lemma.

\begin{lemma}[Borderline cone integral]\label{lem:cone-log}
Let $\rho(t):=R+A(t)$, and assume that $R<(1+\ell)^{-1}$. Then,
\begin{equation}\label{eq:cone-log}
    \int_{\abs{x}\leq\rho(t)}
    \left(
        1-
        \frac{(1+\ell)\abs{x}}{(1+t)^{1+\ell}}
    \right)^{-1}
    \dd x
    \leq
    C(1+t)^{(1+\ell)n}\log(2+t),
\end{equation}
for every $t\geq0$, where $C$ depends only on $n,\ell$, and $R$.
\end{lemma}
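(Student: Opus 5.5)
The plan is to pass to polar coordinates and reduce the estimate to a one-dimensional integral whose only singularity is near the cone boundary. That singularity is cut off by the gap \eqref{eq:cone-gap}, and the cutoff is what produces the logarithm.

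Write $s:=1+t$ and $S:=s^{1+\ell}/(1+\ell)$, so that the integrand is $(1-r/S)^{-1}=S/(S-r)$ with $r=\abs{x}$. By the constant-gap identity following \eqref{eq:cone-gap},
\[
    S-\rho(t)=\frac1{1+\ell}-R=\frac{d_R}{1+\ell}=:\kappa>0 .
\]
Hence on $\abs{x}\leq\rho(t)$ we have $S-r\geq\kappa$, so the integrand is bounded and the integral is finite. In polar coordinates,
\[
    \int_{\abs{x}\leq\rho(t)}\Bigl(1-\frac{r}{S}\Bigr)^{-1}\dd x
    =\abs{\mathbb S^{n-1}}\int_0^{\rho(t)}\frac{S\,r^{n-1}}{S-r}\dd r
    \leq \abs{\mathbb S^{n-1}}\,S^{n}\int_0^{S-\kappa}\frac{\dd r}{S-r},
\]
where we used $r^{n-1}\leq S^{n-1}$ and $\rho(t)=S-\kappa$. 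The last integral equals $\log(S/\kappa)$, which is meaningful since $S\geq 1/(1+\ell)>\kappa$. So the integral is bounded by $C S^n\log(S/\kappa)$.

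Next, $S^n=(1+\ell)^{-n}(1+t)^{(1+\ell)n}$. For the logarithm,
\[
    \log(S/\kappa)=(1+\ell)\log(1+t)+\log\frac{1}{(1+\ell)\kappa}
    =(1+\ell)\log(1+t)+\log\frac{1}{d_R}.
\]
This is bounded by $C\log(2+t)$, because $\log(2+t)\geq\log2>0$ absorbs the constant term. Combining the two bounds gives \eqref{eq:cone-log} with $C$ depending only on $n$, $\ell$ and $R$ (through $d_R$).

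The only delicate point is producing the lower bound $S-r\geq\kappa$ uniformly in $t$; this is exactly what the constant spatial gap supplies. Everything else is routine.
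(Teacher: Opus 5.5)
Your proposal is correct and follows essentially the same argument as the paper. Both pass to polar coordinates, bound $r^{n-1}$ by the cone radius, and integrate the $1/(\text{gap})$ singularity up to the endpoint fixed by the constant gap, which yields $\log\bigl(s^{1+\ell}/d_R\bigr)$. The only cosmetic difference is that you work in the unscaled radial variable $r$, whereas the paper rescales to $\zeta=(1+\ell)r/s^{1+\ell}$.
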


\begin{proof}
Put $s:=1+t$.
Since, as in
\eqref{eq:cone-gap},
\[
    \frac{(1+\ell)\rho(t)}{s^{1+\ell}}
    =
    1-\frac{d_R}{s^{1+\ell}},
\]
polar coordinates and the change of variables
\[
    \zeta:=\frac{(1+\ell)r}{s^{1+\ell}}
\]
give
\begin{equation*}
    \begin{split}
    \int_{\abs{x}\leq\rho(t)}
    \left(
        1-\frac{(1+\ell)\abs{x}}{s^{1+\ell}}
    \right)^{-1}
    \dd x & =
    \abs{\mathbb S^{n-1}}
    \left(
        \frac{s^{1+\ell}}{1+\ell}
    \right)^n
    \int_0^{1-d_R/s^{1+\ell}}
    \frac{\zeta^{n-1}}{1-\zeta}
    \dd \zeta
    \\[0.5em]
    & \leq
    Cs^{(1+\ell)n}
    \int_0^{1-d_R/s^{1+\ell}}
    \frac{1}{1-\zeta}
    \dd \zeta
    \\[0.5em]
    & =
    Cs^{(1+\ell)n}
    \log\left(
        \frac{s^{1+\ell}}{d_R}
    \right)
    \\[0.5em]
    & \leq
    Cs^{(1+\ell)n}\log(2+t). \qedhere
    \end{split}
\end{equation*}
\end{proof}

We conclude by recording that the critical profile is admissible in the
general moment identity. The point is that smoothness is required only in a
neighborhood of the propagation region, not up to nor across the lateral
self-similar boundary.

\begin{remark}\label{rem:moment-Phi-star}
For every finite $\tau$, the cone gap \eqref{eq:cone-gap} implies $K_\tau\subset \subset \mathcal C_\ell$. Hence, $\Phi_{*}$ is smooth in an open neighborhood of $K_\tau$ and satisfies all the hypotheses of \Cref{lem:moment-identity}. No extension across the
self-similar boundary is required.
\end{remark}

\section{Critical differential inequality} \label{sec:critical-inequality}
The purpose of this section is to derive the differential inequalities that
govern the critical blow-up argument. We begin by recording the
nonlinear moment lower bound obtained in \cite{MarconNascimentoSantos2026}, which is valid for every $p>1$ under the
stated assumptions. We then specialize to $p=p_*$ and combine this lower
bound with the critical self-similar adjoint function $\Phi_*$ and the moment
identity from \Cref{lem:moment-identity}. This leads to a nonlinear
differential inequality for a suitable weighted time-integrated functional, which
will provide the connection between the PDE and the ODE comparison argument
used in the next section.

Following the notation of \cite{MarconNascimentoSantos2026}, let us define, for every $p>1$, the nonlinear moment
\begin{equation}\label{eq:H-def}
    H(t)
    :=
    \int_{\R^n}|u(t,x)|^p\dd x.
\end{equation}

We present below the estimate needed in the present paper, together with its reformulation in terms of the exponent
$\sigma_p$ defined in \eqref{eq:beta-sigma-critical}.

\begin{proposition}\label{prop:H-lower}
Let $n\in \N$, $-1<\ell\le 0$, $\mu^2>1/4$, $p>1$, $\varepsilon>0$, $f$ and $g$ satisfying \eqref{eq:data-assumptions} with $\supp f\cup\supp g\subset \overline{B_R}$.
Let $u$ be a weak solution with finite speed of propagation of \eqref{eq:main-p} on $[0,T)$. Then the following statements
hold:
\begin{enumerate}[label=$\mathrm{\alph*})$]

\item\label{item:previous-lower-bound}
There exist constants $t_1\ge1$ and $C_1>0$, independent of $T$, such that
\begin{equation}\label{eq:H-lower-A}
H(t)
\ge
C_1\,\eps^p\,
(1+t)^{-\frac{\ell p}{2}}
(1+A(t))^{-\frac{n-1}{2}(p-2)}
\end{equation}
for a.e. $t\in[t_1,T)$.

\item\label{item:present-lower-bound}
There exist constants $t_1\ge1$ and $C>0$, independent of $T$, such that
\begin{equation}\label{eq:H-lower}
    H(t)
    \geq
    C\eps^p(1+t)^{\sigma_p}
\end{equation}
for almost every $t\in[t_1,T)$, where $\sigma_p$ is given by \eqref{eq:beta-sigma-critical}.
\end{enumerate}
\end{proposition}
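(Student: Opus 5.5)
Part a) is taken over from \cite{MarconNascimentoSantos2026}, so the plan is to recall how that bound arises and then obtain part b) from it by pure exponent algebra. For a), I would test the equation against $\Psi(t,x)=\lambda(t)\varphi_1(x)$. Here $\varphi_1$ is the Yordanov--Zhang eigenfunction from \ref{def:varphi-lambda}, and $\lambda$ is a positive solution of the time ODE $\lambda''-a(t)^2\lambda+V(t)\lambda=0$. Such a $\lambda$ can be written with modified Bessel functions $K_{i\omega/(1+\ell)}$ in the variable $(1+t)^{1+\ell}/(1+\ell)$, and it decays like $(1+t)^{-\ell/2}e^{-A(t)}$.

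\Cref{lem:moment-identity} applied with $\Phi=\Psi$, together with $f,g\ge0$, would give $\int u\Psi\,dx\gtrsim\eps$ for $t\ge t_1$. This is the delicate point: because the Bessel order is imaginary, positivity of $\lambda$ holds only for large arguments, which is why $t_1$ appears. Hölder's inequality on the support $B_{R+A(t)}$ then gives
\[
\eps^p\lesssim H(t)\Bigl(\int_{B_{R+A(t)}}\Psi^{p'}\dd x\Bigr)^{p-1}.
\]
Combining this with the standard bound $\int_{B_\rho}\varphi_1^{p'}\lesssim e^{p'\rho}(1+\rho)^{(n-1)(1-p'/2)}$ yields exactly \eqref{eq:H-lower-A}, since the exponentials cancel against the decay of $\lambda$. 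I take a) as given from the earlier paper.

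For b), the only step is to convert the powers of $1+A(t)$ into powers of $1+t$. From \eqref{eq:a-A-V} we have $1+A(t)\asymp(1+t)^{1+\ell}$ for $t\ge t_1\ge1$, with constants depending only on $\ell$; this also covers $\ell=0$. Two cases arise according to the sign of the exponent. If $p\ge2$, the factor $(1+A)^{-\frac{n-1}{2}(p-2)}$ is bounded below by $c(1+t)^{-(1+\ell)\frac{n-1}{2}(p-2)}$. If $p<2$, the exponent is positive and we use the lower bound on $1+A$ instead. In both cases we obtain the exponent
\[
-\frac{\ell p}{2}-\frac{(1+\ell)(n-1)(p-2)}{2}=(1+\ell)(n-1)-\frac{p}{2}\bigl((1+\ell)n-1\bigr)=\sigma_p,
\]
where the middle expression comes from expanding $-\frac{p}{2}\bigl(\ell+(1+\ell)(n-1)\bigr)=-\frac p2\bigl((1+\ell)n-1\bigr)$. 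This is \eqref{eq:H-lower}, with a modified constant.

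The main obstacle is therefore entirely in a): establishing positivity of the oscillatory-mass time profile and the lower bound on $\int u\Psi$ for $t\ge t_1$. Part b) is a short algebraic verification once a) is available.
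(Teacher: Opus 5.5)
Your part b) is the paper's argument: cite a) from \cite{MarconNascimentoSantos2026}, then convert $1+A(t)$ into $(1+t)^{1+\ell}$. Your exponent algebra is correct. Your case split is in fact more careful than the paper, which invokes only $1+A(t)\ge c(1+t)^{1+\ell}$. That direction suffices only for $p\le 2$. For $p>2$ one needs the upper bound $1+A(t)\le(1+t)^{1+\ell}/(1+\ell)$, as you use, and $p_*>2$ does occur (e.g.\ $\ell=0$, $n=2,3$).

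Your reconstruction of a), however, is wrong at one concrete step, and as a standalone argument it would not give \eqref{eq:H-lower-A}. With $\Psi=\lambda\varphi_1$ and $\lambda\asymp(1+t)^{-\ell/2}e^{-A(t)}$, your H\"older step gives
\[
H(t)\gtrsim\Bigl(\int_{\R^n}u\Psi\dd x\Bigr)^{p}(1+t)^{\ell p/2}\bigl(1+A(t)\bigr)^{-\frac{n-1}{2}(p-2)}.
\]
So the bound $\int u\Psi\dd x\gtrsim\eps$ yields $(1+t)^{+\ell p/2}$, not $(1+t)^{-\ell p/2}$. For $\ell<0$ this loses a factor $(1+t)^{\ell p}$. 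It turns $\sigma_p$ into $\sigma_p+\ell p$ and destroys the critical identity $2-\beta_{p_*}+\sigma_{p_*}=0$ that is used later.

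The missing input is the growth bound $\int u\Psi\dd x\gtrsim\eps(1+t)^{-\ell}$. It comes from the Wronskian identity $W_\Psi=M_\Psi'-2(\lambda'/\lambda)M_\Psi$, which gives $(M_\Psi/\lambda^2)'=W_\Psi/\lambda^2\ge W_\Psi(t_0)/\lambda^2$ once $\lambda>0$ on $[t_0,t]$. Since $\lambda^{-2}\asymp A'(s)e^{2A(s)}$, one gets $\lambda(t)^2\int_{t_0}^t\lambda^{-2}\dd s\asymp(1+t)^{-\ell}$. Equivalently, $M_\Psi$ is bounded below by the product of the decaying and growing Bessel-type time profiles.

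Because you ultimately take a) from the earlier paper, the proof of the proposition as you structure it is sound and coincides with the paper's. But your claim that the sketch yields \eqref{eq:H-lower-A} ``exactly'' is false.
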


\begin{proof}
Estimate \eqref{eq:H-lower-A} follows directly from
\cite[Proposition~15]{MarconNascimentoSantos2026} by taking $\alpha=0$.
Indeed, the cited result applies under the assumptions imposed here and
requires no further restriction on $p>1$. 

It remains to derive \eqref{eq:H-lower}. Since $-1<\ell\leq0$, the definition
of $A$ in \eqref{eq:a-A-V} gives $1+A(t)\asymp(1+t)^{1+\ell}$. Therefore, there exists a constant $c>0$ such that $1+A(t)\ge c(1+t)^{1+\ell}$. Using \eqref{eq:H-lower-A} this implies
\begin{align*}
   H(t)
   &\geq
   C_1c\eps^p
   (1+t)^{
       -\ell p/2
       -(1+\ell)\frac{n-1}{2}(p-2)
   }
   \\
   &=:
   C\eps^p
   (1+t)^{
       (1+\ell)(n-1)
       -\frac{((1+\ell)n-1)p}2
   }
   \\
   &=
   C\eps^p(1+t)^{\sigma_p},
\end{align*}
which proves \ref{item:present-lower-bound}.
\end{proof}

No critical relation has been used in \Cref{prop:H-lower}. Hence,
\eqref{eq:H-lower} is valid for every $p>1$ under the stated assumptions.
From this point onward, we specialize to the critical exponent $p=p_*$. In
this case, the identity
\[
    2-\beta_{p_*}+\sigma_{p_*}=0
\]
matches the temporal growth in \eqref{eq:H-lower} with the scaling of the
critical self-similar adjoint function $\Phi_*$. We now combine these two
ingredients through the moment identity of \Cref{lem:moment-identity} and
introduce the time-integrated functionals used to derive the critical
differential inequality.

To keep the notation consistent with \Cref{sec:general-adj-identity} and \Cref{sec:hypergeom}, we abbreviate
\[
    \mathcal M_*(t):=M_{\Phi_*}(t),
    \qquad
    \mathcal W_*(t):=W_{\Phi_*}(t),
\]
and define
\begin{equation}\label{eq:G-def}
    G(t)
    :=
    G_{\Phi_*}(t)
    =
    \int_{\R^n}|u(t,x)|^{p_*}\Phi_*(t,x)\dd x
\end{equation}
for almost every $t\in(0,T)$.  Here, $\mathcal M_*$ and $\mathcal W_*$ denote, respectively, the moment and
the Wronskian associated with the critical adjoint function $\Phi_*$, while
$G$ is the corresponding weighted nonlinear moment.

We further define
\begin{equation}\label{eq:K-def}
    K(t)
    :=
    \int_0^t(t-s)(2+s)G(s)\dd s
\end{equation}
and
\begin{equation}\label{eq:J-def}
    J(t)
    :=
    \int_0^t(2+s)^{-3}K(s)\dd s.
\end{equation}
Since $G\in L^1_{\loc}([0,T))$, it follows that $K\in W^{2,1}_{\loc}([0,T))$ and $J\in C^2([0,T))$. Moreover,
\begin{equation}\label{eq:K-derivatives}
    K'(t)
    =
    \int_0^t(2+s)G(s)\dd s,
    \qquad
    K''(t)
    =
    (2+t)G(t)
\end{equation}
for almost every $t\in(0,T)$, and
\begin{equation}\label{eq:J-derivative}
    J'(t)
    =
    (2+t)^{-3}K(t).
\end{equation}
Because $\Phi_*>0$, we have $G\geq0$; consequently,
\[
    K(t),\ K'(t),\ J(t),\ J'(t)\geq0
    \qquad\text{for every }t\in[0,T).
\]
We next combine these definitions with the general lower bound from
\Cref{prop:H-lower} to obtain growth estimates for these functions.

\begin{lemma}[Lower bounds for the critical functionals]\label{lem:KJ-lower}
There exist constants $c_0>0$ and $t_2\geq1$, independent of $\eps$ and $T$,
such that
\begin{equation}\label{eq:K-lower}
    K(t)
    \geq
    c_0\eps^{p_*}(2+t)^2,
\end{equation}
\begin{equation}\label{eq:Kprime-initial-lower}
    K'(t)
    \geq
    c_0\eps^{p_*}(2+t),
\end{equation}
\begin{equation}\label{eq:J-lower}
    J(t)
    \geq
    c_0\eps^{p_*}\log(2+t),
\end{equation}
and
\begin{equation}\label{eq:Jprime-lower}
    J'(t)
    \geq
    c_0\eps^{p_*}(2+t)^{-1}
\end{equation}
for every $t\in[t_2,T)$.
\end{lemma}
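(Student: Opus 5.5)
The plan is to derive everything from a pointwise lower bound on $G$ and then integrate successively through the definitions \eqref{eq:K-def} and \eqref{eq:J-def}.

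First, on the support of $u(t,\cdot)$, which lies in $\mathcal C_\ell^+$ by \eqref{eq:cone-gap}, the lower bound in \eqref{eq:Phi-p-comparison} gives $\Phi_*(t,x)\geq(1+t)^{1-\beta_{p_*}}$. Hence
\[
G(t)\geq(1+t)^{1-\beta_{p_*}}H(t).
\]
Combining this with \Cref{prop:H-lower}\ref{item:present-lower-bound} at $p=p_*$ yields
\[
G(t)\geq C\eps^{p_*}(1+t)^{1-\beta_{p_*}+\sigma_{p_*}}=C\eps^{p_*}(1+t)^{-1}
\]
for a.e.\ $t\geq t_1$. The exponent identity is exactly \eqref{eq:critical-id-one}: $1-\beta_{p_*}+\sigma_{p_*}=-1$. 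Since $(1+t)\asymp(2+t)$, this gives $(2+s)G(s)\geq c\eps^{p_*}$ for a.e.\ $s\geq t_1$.

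Next, I integrate through the definitions, using that all integrands are nonnegative so the contribution from $[0,t_1]$ can be dropped.

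For $K'$:
\[
K'(t)=\int_0^t(2+s)G(s)\dd s\geq c\eps^{p_*}(t-t_1)\geq \tfrac c2\eps^{p_*}(2+t)
\]
as soon as $t\geq 2t_1+2$, say.

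For $K$:
\[
K(t)=\int_0^t(t-s)(2+s)G(s)\dd s\geq c\eps^{p_*}\int_{t_1}^t(t-s)\dd s=\tfrac c2\eps^{p_*}(t-t_1)^2,
\]
which is $\gtrsim\eps^{p_*}(2+t)^2$ for $t$ large relative to $t_1$.

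The bound on $J'$ then follows from \eqref{eq:J-derivative}: $J'(t)=(2+t)^{-3}K(t)\geq c\eps^{p_*}(2+t)^{-1}$ beyond the same threshold $t_3$.

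For $J$, integrate this bound from $t_3$ to $t$:
\[
J(t)\geq c\eps^{p_*}\bigl(\log(2+t)-\log(2+t_3)\bigr)\geq \tfrac c2\eps^{p_*}\log(2+t)
\]
once $\log(2+t)\geq2\log(2+t_3)$. Choosing $t_2:=(2+t_3)^2$ and $c_0$ to be the minimum of the constants obtained gives all four estimates on $[t_2,T)$. If $T\leq t_2$ the statement is vacuous.

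There is no real obstacle. The only points needing care are these:
\begin{itemize}
\item The threshold $t_2$ must not depend on $\eps$ or $T$. This holds because $t_1$ from \Cref{prop:H-lower} is independent of $T$ and, by inspection of the cited result, of $\eps$, since the $\eps$-dependence there is purely multiplicative.
\item The a.e.\ character of the bound on $G$ is harmless, since it is only used inside integrals.
\end{itemize}
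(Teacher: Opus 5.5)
Your proposal is correct and follows essentially the same route as the paper: the bound $\Phi_*\geq(1+t)^{1-\beta_{p_*}}$, combined with \Cref{prop:H-lower} and the critical identity \eqref{eq:critical-id-one}, gives $(2+s)G(s)\gtrsim\eps^{p_*}$ for a.e.\ $s\geq t_1$. The bounds for $K'$, $K$, $J'$ and $J$ then follow by successive integration, with a final enlargement of the threshold absorbing the term $\log(2+t_3)$. The only difference is cosmetic: you estimate $K$ directly from its definition \eqref{eq:K-def} instead of integrating the bound on $K'$ once more, which is equivalent and avoids the misprinted equality $K(t)=K(t_1)+\tfrac{C}{2}\eps^{p_*}(t-t_1)^2$ in the paper's write-up.
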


\begin{proof}
By \eqref{eq:Phi-p-comparison} and \Cref{prop:H-lower}, applied with
$p=p_*$, there exist constants $C>0$ and $t_1\geq1$, independent of
$\eps$ and $T$, such that
\begin{align*}
    G(t)
    =
    \int_{\R^n}|u(t,x)|^{p_*}\Phi_*(t,x)\dd x
    \geq
    C(1+t)^{1-\beta_{p_*}}H(t)
    \geq
    C\eps^{p_*}
    (1+t)^{1-\beta_{p_*}+\sigma_{p_*}}
\end{align*}
for almost every $t\in[t_1,T)$. By the critical identity
\eqref{eq:critical-id-one}, $2-\beta_{p_*}+\sigma_{p_*}=0$, and hence
\[
    G(t)
    \geq
    C\eps^{p_*}(1+t)^{-1}.
\]
Consequently,
\begin{equation}\label{eq:weighted-G-lower}
    (2+t)G(t)
    \geq
    C\eps^{p_*}
\end{equation}
for almost every $t\in[t_1,T)$.

Since $K'(t_1)\geq0$, integration of \eqref{eq:weighted-G-lower}, together
with \eqref{eq:K-derivatives}, gives
\[
    K'(t)
    =
    K'(T_1)
    +
    \int_{t_1}^t(2+s)G(s)\dd s
    \geq
    C\eps^{p_*}(t-t_1)
\]
for every $t\in[t_1,T)$. Integrating once more and using $K(t_1)\geq0$, we
obtain
\[
    K(t)=K(t_1)+\frac{C}{2}\eps^{p_*}(t-t_1)^2
    \geq
    \frac{C}{2}\eps^{p_*}(t-t_1)^2.
\]
We may therefore choose a smaller constant $c$ and a $t_2\geq t_1$, depending on $c$ and $t_1$ but independent of $\eps$ and $T$, such that
\begin{equation}\label{eq:preliminary-K-bounds}
    K'(t)
    \geq
    c\eps^{p_*}(2+t),
    \qquad
    K(t)
    \geq
    c\eps^{p_*}(2+t)^2
\end{equation}
for every $t\in[t_2,T)$.

Using \eqref{eq:J-derivative} and the second estimate in
\eqref{eq:preliminary-K-bounds}, we find
\[
    J'(t)
    =
    (2+t)^{-3}K(t)
    \geq
    c\eps^{p_*}(2+t)^{-1}
\]
for every $t\in[t_2,T)$. Since $J(t_2)\geq0$, integration over
$[t_2,t]$ yields
\[
    J(t) \geq J(t)-J(t_2)
    \geq 
    c\eps^{p_*}
    \int_{t_2}^t\frac{\dd s}{2+s}
    =
    c\eps^{p_*}
    \log\left(\frac{2+t}{2+t_2}\right).
\]
Finally, choose $t_3\geq t_2$ so large that
\[
    -\log(2+t_2)
    \geq
    -\frac12\log(2+t_3)
\]
we obtain that 
\[J(t)\ge \frac c2 \eps^{p_*}
    \log(2+t) \qquad\text{for every }t\geq t_3\]
After taking $c_0$ as de minimum of the constants obtained above, and relabelling $t_3$ as $t_2$, all four estimates
\eqref{eq:Kprime-initial-lower}-\eqref{eq:Jprime-lower} follow for every $t\in[t_2,T)$.
\end{proof}

\begin{lemma}[Basic inequality for the critical adjoint]
\label{lem:adjoint-identity}
For every $t\in[0,T)$, one has
\begin{align}\label{eq:adjoint-basic-inequality}
    \int_0^t(t-s)G(s)\dd s
    &\leq
    \int_{\R^n}|u(t,x)|\Phi_*(t,x)\dd x
    +
    2\int_0^t\int_{\R^n}
    |u(s,x)|\,|\partial_s\Phi_*(s,x)|
    \dd x\dd s.
\end{align}
\end{lemma}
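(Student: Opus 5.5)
We apply the integrated moment identity \eqref{eq:moment-integrated-general} of \Cref{lem:moment-identity} with $\Phi=\Phi_*$. This is admissible by \Cref{rem:moment-Phi-star}, since $\Phi_*$ is smooth near every $K_\tau$ and solves $\Lop\Phi_*=0$ there by \Cref{lem:adjoint}. The identity gives, for every $t\in[0,T)$,
\[
    \int_0^t(t-s)G(s)\dd s
    =
    \mathcal M_*(t)
    -2\int_0^t\int_{\R^n}u(s,x)\,\partial_s\Phi_*(s,x)\dd x\dd s
    -\mathcal M_*(0)-t\,\mathcal W_*(0).
\]
The plan is to show that the two initial terms on the right are nonpositive, so that they can be dropped. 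The remaining two terms are then bounded in absolute value.

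For the initial terms, \eqref{eq:initial-moments-general} gives $\mathcal M_*(0)=\eps\int f\,\Phi_*(0,x)\dd x$. Also, \eqref{eq:initial-W-general} gives $\mathcal W_*(0)=\eps\int\bigl(g\,\Phi_*(0,x)-f\,\partial_t\Phi_*(0,x)\bigr)\dd x$. The data are supported in $\overline{B_R}$ with $R<(1+\ell)^{-1}$, so \eqref{eq:initial-sign-Phi} applies on the support: $\Phi_*(0,\cdot)>0$ and $-\partial_t\Phi_*(0,\cdot)>0$ there. Together with $f,g\ge0$, this gives $\mathcal M_*(0)\ge0$ and $\mathcal W_*(0)\ge0$. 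Since $t\ge0$, the term $-\mathcal M_*(0)-t\mathcal W_*(0)$ is nonpositive and can be discarded.

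For the remaining terms, $\Phi_*>0$ on the propagation region gives $\mathcal M_*(t)\le\int|u(t,x)|\Phi_*(t,x)\dd x$. The double integral is bounded by moving the absolute value inside, which produces exactly the right-hand side of \eqref{eq:adjoint-basic-inequality}.

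The only delicate point is to justify that every pairing is finite and supported inside the cone, where the bounds on $\Phi_*$ hold. This follows from finite propagation and the cone gap \eqref{eq:cone-gap}. Indeed, on $K_t$ the factor $(1-y)^{-1/p_*'}$ is bounded for each fixed $t$, so $\partial_s\Phi_*$ is bounded on $K_t$, and $u\in C([0,t];L^1_{\loc})$. Hence the double integral converges.
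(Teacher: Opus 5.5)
Your proposal is correct and follows essentially the same route as the paper. Both apply the integrated moment identity \eqref{eq:moment-integrated-general} with $\Phi=\Phi_*$, use $f,g\geq0$ together with \eqref{eq:initial-sign-Phi} to discard $\mathcal M_*(0)+t\mathcal W_*(0)\geq0$, and then bound the remaining terms by taking absolute values. Your $\mathcal W_*(0)\geq0$ is all that is needed here, although the paper records strict positivity.
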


\begin{proof}
By \Cref{rem:moment-Phi-star}, the function $\Phi_*$ satisfies the
hypotheses of \Cref{lem:moment-identity}. Applying
\eqref{eq:moment-integrated-general} with $\Phi=\Phi_*$ gives
\begin{align*}
    \int_0^t(t-s)G(s)\dd s
    +t\mathcal W_*(0)
    +\mathcal M_*(0)
    &=
    \mathcal M_*(t)
    -
    2\int_0^t\int_{\R^n}
    u(s,x)\partial_s\Phi_*(s,x)\dd x\dd s.
\end{align*}
The initial moments are
\[
    \mathcal M_*(0)
    =
    \eps\int_{\R^n}f(x)\Phi_*(0,x)\dd x
    \geq0
\]
and
\[
    \mathcal W_*(0)
    =
    \eps\int_{\R^n}
    \left[
        g(x)\Phi_*(0,x)
        -
        f(x)\partial_t\Phi_*(0,x)
    \right]\dd x
    >0.
\]
Indeed, the assumptions in \Cref{thm:main} give $f,g\geq0$ with
$(f,g)\neq(0,0)$, while \eqref{eq:initial-sign-Phi} ensures that
\[
    \Phi_*(0,x)>0
    \qquad\text{and}\qquad
    -\partial_t\Phi_*(0,x)>0
\]
on the support of the initial data.

Dropping the nonnegative terms $t\mathcal W_*(0)$ and
$\mathcal M_*(0)$, and then taking absolute values on the right-hand side,
we obtain
\begin{align*}
    \int_0^t(t-s)G(s)\dd s
    &\leq
    |\mathcal M_*(t)|
    +
    2\int_0^t\int_{\R^n}
    |u(s,x)|\,|\partial_s\Phi_*(s,x)|
    \dd x\dd s
    \\
    &\leq
    \int_{\R^n}|u(t,x)|\Phi_*(t,x)\dd x
    +
    2\int_0^t\int_{\R^n}
    |u(s,x)|\,|\partial_s\Phi_*(s,x)|
    \dd x\dd s,
\end{align*}
which is \eqref{eq:adjoint-basic-inequality}.
\end{proof}

To estimate the right-hand side of
\eqref{eq:adjoint-basic-inequality}, we control both weighted terms by the
critical nonlinear moment $G$. The estimate involving $\Phi_*$ follows
from its uniform size inside the propagation region. For
$\partial_t\Phi_*$, the boundary behavior of the critical adjoint produces
a borderline spatial integral and, consequently, the logarithmic factor
appearing below.

\begin{lemma}[Weighted estimates for the critical adjoint]
\label{lem:weighted-u-estimates}
There exists a constant $C>0$, independent of $\eps$ and $T$, such
that, for almost every $t\in(0,T)$,
\begin{equation}\label{eq:uPhi-estimate}
    \int_{\R^n}
    |u(t,x)|\Phi_*(t,x)\dd x
    \leq
    C(1+t)^{1+1/p_*}G(t)^{1/p_*},
\end{equation}
and
\begin{equation}\label{eq:uPhit-estimate}
    \int_{\R^n}
    |u(t,x)|\,|\partial_t\Phi_*(t,x)|
    \dd x
    \leq
    C(1+t)^{1/p_*}
    \bigl(\log(2+t)\bigr)^{1/p_*'}
    G(t)^{1/p_*}.
\end{equation}
\end{lemma}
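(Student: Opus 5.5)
Both estimates follow from Hölder's inequality with exponents $p_*$ and $p_*'$, applied on the propagation ball $B_{\rho(t)}$, $\rho(t)=R+A(t)$, which contains $\supp u(t,\cdot)$ by \eqref{eq:finite-propagation}. In each case we split the weight so that one factor reproduces $G(t)$ and the other is an explicit integral of a power of the weight.

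For \eqref{eq:uPhi-estimate} we write $|u|\Phi_*=(|u|\Phi_*^{1/p_*})\,\Phi_*^{1/p_*'}$ and obtain
\[
\int_{\R^n}|u|\Phi_*\dd x\le G(t)^{1/p_*}\Bigl(\int_{B_{\rho(t)}}\Phi_*\dd x\Bigr)^{1/p_*'} .
\]
By \eqref{eq:Phi-p-comparison} and $|B_{\rho(t)}|\lesssim(1+t)^{(1+\ell)n}$, which holds because $1+A(t)\asymp(1+t)^{1+\ell}$, the last factor is bounded by $C(1+t)^{((1+\ell)n+1-\beta_{p_*})/p_*'}$. The critical identity \eqref{eq:critical-id-two} states precisely that this exponent equals $1+1/p_*$, which gives the first bound.

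For \eqref{eq:uPhit-estimate} the splitting must not put any $t$-power from $\partial_t\Phi_*$ into the $G$ factor incorrectly. We write
\[
|u||\partial_t\Phi_*|=\bigl(|u|\Phi_*^{1/p_*}\bigr)\bigl(|\partial_t\Phi_*|\Phi_*^{-1/p_*}\bigr),
\]
so that Hölder gives $G(t)^{1/p_*}$ times $\bigl(\int_{B_{\rho(t)}}|\partial_t\Phi_*|^{p_*'}\Phi_*^{-p_*'/p_*}\dd x\bigr)^{1/p_*'}$. The lower bound in \eqref{eq:Phi-p-comparison} gives $\Phi_*^{-p_*'/p_*}\le(1+t)^{(\beta_{p_*}-1)p_*'/p_*}$. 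Estimate \eqref{eq:Phi-p-t} gives $|\partial_t\Phi_*|^{p_*'}\lesssim(1+t)^{-\beta_{p_*}p_*'}(1-y)^{-1}$, with $y$ as in \eqref{defofy}. The exponent $-1/p_*'$ in \eqref{eq:Phi-p-t} is borderline, which is exactly why raising to $p_*'$ yields the power $-1$. Lemma \ref{lem:cone-log} then bounds the spatial integral of $(1-y)^{-1}$ over $B_{\rho(t)}$ by $C(1+t)^{(1+\ell)n}\log(2+t)$.

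The remaining step is the exponent bookkeeping. After taking the $1/p_*'$ root, the power of $(1+t)$ is
\[
-\beta_{p_*}+\frac{\beta_{p_*}-1}{p_*}+\frac{(1+\ell)n}{p_*'}=\frac{(1+\ell)n+1-\beta_{p_*}}{p_*'}-1 .
\]
This follows from $-\beta+(\beta-1)/p=-\beta/p'-1/p$ and $1-1/p=1/p'$. By \eqref{eq:critical-id-two} it equals $1/p_*$, as required, and the logarithm appears with power $1/p_*'$.

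The main obstacle is this second estimate, in particular choosing the splitting so that the time exponents collapse exactly through \eqref{eq:critical-id-two}. The boundary singularity is harmless, since it is handled entirely by Lemma \ref{lem:cone-log} thanks to the cone gap \eqref{eq:cone-gap}. A further point to check is that the constants in Corollary \ref{cor:critical-adjoint} are uniform on $\mathcal C_\ell^+$, which contains the propagation region for all $t$; this makes $C$ independent of $\eps$ and $T$.
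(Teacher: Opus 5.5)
Your proposal is correct and follows the paper's proof essentially step by step. It uses the same Hölder splittings $|u|\Phi_*^{1/p_*}\cdot\Phi_*^{1/p_*'}$ and $|u|\Phi_*^{1/p_*}\cdot|\partial_t\Phi_*|\Phi_*^{-1/p_*}$, the same bounds from Corollary~\ref{cor:critical-adjoint} together with Lemma~\ref{lem:cone-log}, and the same collapse of the time exponents through the critical identity \eqref{eq:critical-id-two}.
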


\begin{proof}
By finite propagation and Hölder's inequality,
\[
    \int_{\R^n}|u(t,x)|\Phi_*(t,x)\dd x
    \leq
    G(t)^{1/p_*}
    \left(
        \int_{|x|\leq R+A(t)}
        \Phi_*(t,x)\dd x
    \right)^{1/p_*'}.
\]
Applying \eqref{eq:Phi-comparison-general} with
$\beta=\beta_{p_*}$, and using $R+A(t)\leq C(1+t)^{1+\ell}$, we obtain
\[
    \int_{|x|\leq R+A(t)}
    \Phi_*(t,x)\dd x
    \leq
    C(1+t)^{(1+\ell)n+1-\beta_{p_*}}.
\]
Consequently,
\[
    \int_{\R^n}|u(t,x)|\Phi_*(t,x)\dd x
    \leq
    C(1+t)^{
        \frac{(1+\ell)n+1-\beta_{p_*}}{p_*'}
    }
    G(t)^{1/p_*}.
\]
The critical identity \eqref{eq:critical-id-two} now gives
\eqref{eq:uPhi-estimate}.

Similarly, Hölder's inequality yields
\[
    \int_{\R^n}
    |u(t,x)|\,|\partial_t\Phi_*(t,x)|
    \dd x
    \leq
    G(t)^{1/p_*}
    \left(
        \int_{|x|\leq R+A(t)}
        |\partial_t\Phi_*(t,x)|^{p_*'}
        \Phi_*(t,x)^{-p_*'/p_*}
        \dd x
    \right)^{1/p_*'}.
\]
From the definition of $\delta_{\beta_{p_*}}$ in \eqref{eq:delta-beta}, we have
\[
    \delta_{\beta_{p_*}}
    =
    \frac{(1+\ell)n+1-2\beta_{p_*}}{2(1+\ell)}
    =
    \frac1{p_*}.
\]
In particular, $p_*'\bigl(\delta_{\beta_{p_*}}-1\bigr)=-1$. Therefore, applying \eqref{eq:Phi-comparison-general} and
\eqref{eq:Phi-t-general} with $\beta=\beta_{p_*}$, we find
\begin{align*}
    |\partial_t\Phi_*(t,x)|^{p_*'}
    \Phi_*(t,x)^{-p_*'/p_*}
    &\leq
    C(1+t)^{
        -\beta_{p_*}p_*'
        +(\beta_{p_*}-1)p_*'/p_*
    }
    \left(
        1-
        \frac{(1+\ell)|x|}{(1+t)^{1+\ell}}
    \right)^{-1}.
\end{align*}
The estimate in \Cref{lem:cone-log} then gives
\begin{align*}
    \int_{|x|\leq R+A(t)}
    |\partial_t\Phi_*(t,x)|^{p_*'}
    \Phi_*(t,x)^{-p_*'/p_*}
    \dd x
    \leq
    C(1+t)^{
        -\beta_{p_*}p_*'
        +(\beta_{p_*}-1)p_*'/p_*
        +(1+\ell)n
    }
    \log(2+t).
\end{align*}
After taking the $p_*'$-th root, the exponent of $1+t$ becomes
\begin{align*}
    -\beta_{p_*}
    +\frac{\beta_{p_*}-1}{p_*}+\frac{(1+\ell)n}{p_*'}
    =
    \frac{(1+\ell)n+1-\beta_{p_*}}{p_*'}-1
    =
    \frac1{p_*},
\end{align*}
where the last equality follows again from
\eqref{eq:critical-id-two}. This proves
\eqref{eq:uPhit-estimate}.
\end{proof}

We next integrate \eqref{eq:adjoint-basic-inequality} over the time
variable. This transforms the linear convolution with $G$ into a quadratic
one which will be used to controls $J$ from above.

\begin{lemma}[Quadratic critical estimate]
\label{lem:quadratic-critical}
There exists a constant $C>0$, independent of $\eps$ and $T$, such
that, for every $t\in(0,T)$,
\begin{equation}\label{eq:quadratic-critical}
    \frac12\int_0^t(t-s)^2G(s)\dd s
    \leq
    C K'(t)^{1/p_*}
    (2+t)^{1+1/p_*'}
    \bigl(\log(2+t)\bigr)^{1/p_*'}.
\end{equation}
\end{lemma}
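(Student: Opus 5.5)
The plan is to integrate the basic inequality \eqref{eq:adjoint-basic-inequality} of \Cref{lem:adjoint-identity} in time over $[0,t]$. We then bound the two resulting weighted terms with \Cref{lem:weighted-u-estimates} and close with a single Hölder inequality in time that produces $K'(t)$.

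\textbf{Left-hand side.} Replace $t$ by $\tau$ in \eqref{eq:adjoint-basic-inequality} and integrate over $\tau\in[0,t]$. By Fubini (all integrands are nonnegative),
\[
\int_0^t\!\int_0^\tau(\tau-s)G(s)\dd s\dd\tau=\int_0^t G(s)\int_s^t(\tau-s)\dd\tau\dd s=\frac12\int_0^t(t-s)^2G(s)\dd s ,
\]
which is exactly the left-hand side of \eqref{eq:quadratic-critical}.

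\textbf{First term on the right.} It becomes $\int_0^t\int_{\R^n}|u|\Phi_*\dd x\dd\tau$. By \eqref{eq:uPhi-estimate} this is at most
\[
C\int_0^t(1+\tau)^{1+1/p_*}G(\tau)^{1/p_*}\dd\tau\le C(2+t)\int_0^t(2+\tau)^{1/p_*}G(\tau)^{1/p_*}\dd\tau .
\]

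\textbf{Second term on the right.} After Fubini it equals
\[
2\int_0^t(t-s)\int_{\R^n}|u(s,x)||\partial_s\Phi_*(s,x)|\dd x\dd s\le 2(2+t)\int_0^t\int_{\R^n}|u||\partial_s\Phi_*|\dd x\dd s .
\]
By \eqref{eq:uPhit-estimate} and the monotonicity of $\log(2+s)$, this is bounded by
\[
C(2+t)\bigl(\log(2+t)\bigr)^{1/p_*'}\int_0^t(2+s)^{1/p_*}G(s)^{1/p_*}\dd s .
\]
Since $\log(2+t)\ge\log 2>0$, the first term can also be absorbed into this form by adjusting $C$. Hence the whole right-hand side is controlled by
\[
C(2+t)\bigl(\log(2+t)\bigr)^{1/p_*'}\int_0^t\bigl((2+s)G(s)\bigr)^{1/p_*}\dd s .
\]

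\textbf{Hölder step.} Apply Hölder with exponents $p_*$ and $p_*'$ to $\bigl((2+s)G(s)\bigr)^{1/p_*}\cdot1$:
\[
\int_0^t\bigl((2+s)G(s)\bigr)^{1/p_*}\dd s\le\Bigl(\int_0^t(2+s)G(s)\dd s\Bigr)^{1/p_*}t^{1/p_*'}\le K'(t)^{1/p_*}(2+t)^{1/p_*'} .
\]
Here the formula \eqref{eq:K-derivatives} for $K'$ was used. Combining the factors gives $(2+t)^{1+1/p_*'}\bigl(\log(2+t)\bigr)^{1/p_*'}K'(t)^{1/p_*}$, which is \eqref{eq:quadratic-critical}.

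\textbf{Expected difficulty.} The main point needing care is bookkeeping rather than any hard estimate. Estimates \eqref{eq:uPhi-estimate} and \eqref{eq:uPhit-estimate} hold only almost everywhere, but they are integrated, so this is harmless. Inequality \eqref{eq:adjoint-basic-inequality} holds for every $\tau$, and both sides are measurable in $\tau$ because $u\in C^1([0,T);L^1_{\loc})$ and $G\in L^1_{\loc}$. One must also check that the powers of $(1+\tau)$ in the two weighted estimates differ by exactly one factor of $(1+\tau)\le 2+t$. This is what allows a single time integral $\int_0^t((2+s)G)^{1/p_*}\dd s$ to dominate both terms.
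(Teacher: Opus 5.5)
Your proposal is correct and follows essentially the same route as the paper: integrate \eqref{eq:adjoint-basic-inequality} in time, use Fubini to produce $\frac12\int_0^t(t-s)^2G(s)\dd s$ and to convert the double time integral of $\int_{\R^n}|u||\partial_s\Phi_*|\dd x$ into $\int_0^t(t-s)(\cdots)\dd s$, insert the weighted estimates of \Cref{lem:weighted-u-estimates}, and close with H\"older against $K'(t)=\int_0^t(2+s)G(s)\dd s$. The only difference is cosmetic: you bound the weights $1+\tau$, $t-s$ and $\log(2+s)$ by their endpoint values before a single H\"older step with the constant function $1$, whereas the paper keeps these weights inside the conjugate integrals and applies H\"older separately to each term, which gives the same power of $2+t$.
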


\begin{proof}
By \Cref{lem:adjoint-identity,lem:weighted-u-estimates}, for almost every
$r\in(0,T)$,
\begin{equation}\label{eq:int_G}
    \int_0^r(r-s)G(s)\dd s
    \leq
    C(1+r)^{1+1/p_*}G(r)^{1/p_*}
    +
    C\int_0^r
    (1+s)^{1/p_*}
    \bigl(\log(2+s)\bigr)^{1/p_*'}
    G(s)^{1/p_*}\dd s.
\end{equation}
    
Now we integrate this inequality over $r\in(0,t)$. For the left-hand side, Fubini's theorem gives
\[
    \int_0^t\int_0^r(r-s)G(s)\dd s\dd r
    =
    \frac12\int_0^t(t-s)^2G(s)\dd s.
\]

For the first term on the right-hand side of \eqref{eq:int_G}, we use
\[
    (1+r)^{1+1/p_*}G(r)^{1/p_*}
    \leq
    (2+r)\bigl((2+r)G(r)\bigr)^{1/p_*},
\]
with the Hölder's inequality and \eqref{eq:K-derivatives} to obtain
\begin{align*}
    \int_0^t
    (1+r)^{1+1/p_*}G(r)^{1/p_*}\dd r
    &\leq
    \left(
        \int_0^t(2+r)G(r)\dd r
    \right)^{1/p_*}
    \left(
        \int_0^t(2+r)^{p_*'}\dd r
    \right)^{1/p_*'}
    \\
    &\leq
    C K'(t)^{1/p_*}(2+t)^{1+1/p_*'}.
\end{align*}

For the last term on the right-hand side of \eqref{eq:int_G}, Fubini's theorem yields
\begin{align*}
    \int_0^t\int_0^r
    (1+s)^{1/p_*}
    \bigl(\log(2+s)\bigr)^{1/p_*'}
    G(s)^{1/p_*}\dd s\dd r
    &=
    \int_0^t\int_s^t
    (1+s)^{1/p_*}
    \bigl(\log(2+s)\bigr)^{1/p_*'}
    G(s)^{1/p_*}\dd r\dd s\\
    &=\int_0^t
    (t-s)(1+s)^{1/p_*}
    \bigl(\log(2+s)\bigr)^{1/p_*'}
    G(s)^{1/p_*}\dd s
    \\
    &\leq
    \int_0^t
    (t-s)\bigl((2+s)G(s)\bigr)^{1/p_*}
    \bigl(\log(2+s)\bigr)^{1/p_*'}
    \dd s.
\end{align*}
Another application of Hölder's inequality gives
\begin{align*}
    \int_0^t
    (t-s)\bigl((2+s)G(s)\bigr)^{1/p_*}
    \bigl(\log(2+s)\bigr)^{1/p_*'}
    \dd s
    \leq
    K'(t)^{1/p_*}
    \left(
        \int_0^t
        (t-s)^{p_*'}\log(2+s)\dd s
    \right)^{1/p_*'}.
\end{align*}
Since
\[
    \int_0^t
    (t-s)^{p_*'}\log(2+s)\dd s
    \leq
    \frac{t^{p_*'+1}}{p_*'+1}\log(2+t)
    \leq 
    (2+t)^{p_*'+1}\log(2+t),
\]
it follows that
\begin{align*}
    &\int_0^t\int_0^r
    (1+s)^{1/p_*}
    \bigl(\log(2+s)\bigr)^{1/p_*'}
    G(s)^{1/p_*}\dd s\dd r
    \leq
    C K'(t)^{1/p_*}
    (2+t)^{1+1/p_*'}
    \bigl(\log(2+t)\bigr)^{1/p_*'}.
\end{align*}
Finally, since $\log(2+t)\geq\log 2>0$, the contribution of the first
term is bounded by the same expression. This proves
\eqref{eq:quadratic-critical}.
\end{proof}

The quadratic convolution obtained in \Cref{lem:quadratic-critical} admits
an exact representation in terms of $J$.

\begin{lemma}[Exact representation of $J$]
\label{lem:J-G}
For every $t\in[0,T)$,
\begin{equation}\label{eq:J-G}
    (2+t)^2J(t)
    =
    \frac12\int_0^t(t-s)^2G(s)\dd s.
\end{equation}
\end{lemma}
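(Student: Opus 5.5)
The plan is to prove \eqref{eq:J-G} by writing $J$ as a single integral against $G$. I will substitute the definition \eqref{eq:K-def} of $K$ into \eqref{eq:J-def} and exchange the order of integration. The integrand $(2+s)^{-3}(s-r)(2+r)G(r)$ is nonnegative on $\{0\le r\le s\le t\}$, since $G\ge0$, and $G\in L^1_{\loc}([0,T))$. Tonelli's theorem therefore applies without any integrability issue and gives
\[
    J(t)
    =
    \int_0^t (2+r)G(r)
    \left(
        \int_r^t (s-r)(2+s)^{-3}\dd s
    \right)\dd r .
\]

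The only real step is evaluating the inner kernel in closed form. I will set $a:=2+r$, $b:=2+t$ and change variables to $w=2+s$. The inner integral becomes $\int_a^b (w-a)w^{-3}\dd w$, which has primitive $-w^{-1}+\tfrac a2 w^{-2}$. Evaluating between $a$ and $b$ gives
\[
    \frac1{2a}-\frac1b+\frac{a}{2b^2}
    =
    \frac{b^2-2ab+a^2}{2ab^2}
    =
    \frac{(t-r)^2}{2(2+r)(2+t)^2}.
\]
Multiplying by the prefactor $(2+r)G(r)$ cancels the factor $(2+r)$. This leaves $J(t)=\frac{1}{2(2+t)^2}\int_0^t(t-r)^2G(r)\dd r$, which is exactly \eqref{eq:J-G} after multiplying by $(2+t)^2$. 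At $t=0$ both sides vanish, so the identity holds on all of $[0,T)$.

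As a cross-check, I could instead differentiate both sides. The right-hand side $\tfrac12\int_0^t(t-s)^2G(s)\dd s$ has third derivative $G$ almost everywhere, and its first two derivatives vanish at $0$. Using \eqref{eq:J-derivative} and \eqref{eq:K-derivatives}, one can verify that $(2+t)^2J$ satisfies the same third-order relation with the same initial data. The Fubini computation is more direct, however, and I will present that one.

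I expect no genuine obstacle. The only point requiring care is the algebra of the kernel, namely making sure that $(b-a)^2$ appears and that the factor $(2+r)$ cancels. That cancellation is precisely why the weight $(2+s)$ in $K$ and the weight $(2+s)^{-3}$ in $J$ were chosen.
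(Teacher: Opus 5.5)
Your proposal is correct and follows the paper's own proof: you substitute $K$ into $J$, exchange the order of integration, and evaluate the inner kernel $\int_r^t (s-r)(2+s)^{-3}\dd s=\frac{(t-r)^2}{2(2+r)(2+t)^2}$, whose factor $(2+r)$ cancels the weight in $K$. Your justification of the exchange via Tonelli, using $G\ge 0$, is slightly more explicit than the paper's bare appeal to Fubini.
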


\begin{proof}
By the definitions of $J$ and $K$ in \eqref{eq:K-def} and \eqref{eq:J-def},
\[
    J(t)
    =
    \int_0^t
    \frac{1}{(2+r)^3}
    \int_0^r
    (r-s)(2+s)G(s)\dd s\dd r.
\]
Again, Fubini's theorem plus a direct calculation gives
\begin{align*}
J(t) &=
    \int_0^t
    (2+s)G(s)
    \left(
        \int_s^t
        \frac{r-s}{(2+r)^3}\dd r
    \right)
    \dd s \\
    &= \int_0^t
    (2+s)G(s)
        \frac{(t-s)^2}
    {2(2+s)(2+t)^2}
    \dd s \\
    &=\frac{1}{2(2+t)^2}
    \int_0^t(t-s)^2G(s)\dd s,
\end{align*}
which is equivalent to \eqref{eq:J-G}.
\end{proof}

Combining the quadratic estimate in \Cref{lem:quadratic-critical} with the
exact representation in \Cref{lem:J-G}, and then using the relation between
$K$ and $J'$, yields the closed differential inequality needed in the
blow-up argument.

\begin{proposition}[Critical differential inequality]
\label{prop:critical-differential}
There exists a constant $C_0>0$, independent of $\eps$ and $T$, such
that, for every $t\in(0,T)$,
\begin{equation}\label{eq:J-critical-differential}
    (2+t)^2J''(t)
    +
    3(2+t)J'(t)
    \geq
    C_0\bigl(\log(2+t)\bigr)^{1-p_*}
    J(t)^{p_*}.
\end{equation}
\end{proposition}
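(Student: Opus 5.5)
The plan is to combine \Cref{lem:quadratic-critical} with \Cref{lem:J-G}, and then rewrite $K'$ in terms of $J$. By \Cref{lem:J-G}, the left-hand side of \eqref{eq:quadratic-critical} equals $(2+t)^2J(t)$. Hence
\[
    (2+t)^2J(t)\leq C K'(t)^{1/p_*}(2+t)^{1+1/p_*'}\bigl(\log(2+t)\bigr)^{1/p_*'} .
\]
Raising both sides to the power $p_*$ and using $p_*/p_*'=p_*-1$ gives
\[
    K'(t)\geq C^{-p_*}(2+t)^{2p_*-p_*-(p_*-1)}\bigl(\log(2+t)\bigr)^{1-p_*}J(t)^{p_*}.
\]
The exponent of $(2+t)$ is $2p_*-p_*-(p_*-1)=1$, so
\[
    K'(t)\geq c(2+t)\bigl(\log(2+t)\bigr)^{1-p_*}J(t)^{p_*}.
\]

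Next, express $K'$ through $J$. From \eqref{eq:J-derivative} we have $K(t)=(2+t)^3J'(t)$. Since $K\in W^{2,1}_{\loc}$, $K$ is $C^1$, so $J'$ is $C^1$ and
\[
    K'(t)=(2+t)^3J''(t)+3(2+t)^2J'(t).
\]
Substituting this and dividing by $(2+t)$ would give
\[
    (2+t)^2J''+3(2+t)J'\geq c\bigl(\log(2+t)\bigr)^{1-p_*}J^{p_*}.
\]
This matches \eqref{eq:J-critical-differential}, except that the exponent of the prefactor should come out consistently; I check it below.

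Checking the powers: the left side after substitution is $(2+t)^3J''+3(2+t)^2J'$, and it is bounded below by $c(2+t)(\log)^{1-p_*}J^{p_*}$. Dividing by $(2+t)$ gives $(2+t)^2J''+3(2+t)J'$ on the left, which is exactly the claimed form. So the only real content is the exponent bookkeeping $2p_*-p_*-(p_*-1)=1$, which follows since $(1+1/p_*')p_*=p_*+p_*-1$. There is a subtle point at $t$ near $0$: there $\log(2+t)\geq\log2>0$, so no degeneration occurs, and the inequality holds for every $t\in(0,T)$. It holds for every $t$, not just almost every $t$, because $K$ is $C^1$ and \Cref{lem:quadratic-critical} holds for every $t$.

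I expect no serious obstacle. The main point requiring care is regularity: the identity $K'=(2+t)^3J''+3(2+t)^2J'$ requires $J\in C^2$, which was noted after \eqref{eq:J-def}. The other point is applying the power $p_*$ to an inequality between nonnegative quantities, which is legitimate since $K'\geq0$ and $J\geq0$.
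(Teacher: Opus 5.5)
Your proposal is correct and follows the paper's proof essentially line by line: you combine \Cref{lem:quadratic-critical} with the exact representation of \Cref{lem:J-G}, raise to the power $p_*$ (your exponent bookkeeping $2p_*-p_*-(p_*-1)=1$ is right), and then substitute $K'(t)=(2+t)^3J''(t)+3(2+t)^2J'(t)$, obtained from $K=(2+t)^3J'$, before dividing by $2+t$. Your remarks on regularity ($K\in C^1$, hence $J\in C^2$) and on the inequality holding for every $t$ are accurate, and they only make explicit what the paper leaves implicit.
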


\begin{proof}
By \Cref{lem:quadratic-critical,lem:J-G},
\[
    (2+t)^2J(t)
    \leq
    C K'(t)^{1/p_*}
    (2+t)^{1+1/p_*'}
    \bigl(\log(2+t)\bigr)^{1/p_*'}.
\]
Raising this inequality to the power $p_*$, setting $C_0=C^{-p_*}$ and rearranging, we obtain
\begin{equation}\label{eq:Kprime-J}
    K'(t)
    \geq
    C_0(2+t)
    \bigl(\log(2+t)\bigr)^{1-p_*}
    J(t)^{p_*}.
\end{equation}

On the other hand, from \eqref{eq:J-derivative} we have
\[
    K(t)=(2+t)^3J'(t),
\]
and hence
\[
    K'(t)
    =
    (2+t)^3J''(t)
    +
    3(2+t)^2J'(t).
\]
Replacing this expression into \eqref{eq:Kprime-J} and dividing by $2+t$ proves
\eqref{eq:J-critical-differential}.
\end{proof}

\section{An ODE comparison lemma}\label{sec:ode}
The following comparison principle for ordinary differential inequalities was first established in \cite{ZhouHan2014}. It was
subsequently used to prove finite-time blow-up at critical exponents and to
derive the corresponding lifespan estimates in works such as
\cite{IkedaSobajima2018,PalmieriTu2019}. We include its proof for the
convenience of the reader and to make the presentation more complete and
self-contained.

\begin{lemma}[ODE lifespan estimate]
\label{lem:critical-ODE}
Let $p>1$ and $C_0,c_0,\tau_0>0$. There exist constants
$\eta_0\in(0,1]$ and $C>0$, depending only on
$p,C_0,c_0$, and $\tau_0$, with the following property.

Let $\tau_*\in(\tau_0,+\infty]$, $0<\eta\leq\eta_0$, and suppose that $\mathcal J\in W_{\loc}^{2,1}([\tau_0,\tau_*))$
satisfies
\begin{numcases}{}
  \mathcal J''(\tau)+2\mathcal J'(\tau) \geq C_0\tau^{1-p}\mathcal J(\tau)^p, & a.e.\ $\tau\in(\tau_0,\tau_*)$ \label{eq:ODE-inequality} \\
  \mathcal J(\tau)\geq c_0\eta\tau,\; \mathcal J'(\tau)\geq c_0\eta, & for all\ $\tau\in[\tau_0,\tau_*)$ \label{eq:ODE-lower-data}
\end{numcases}

Then
\begin{equation}\label{eq:ODE-lifespan}
    \tau_*
    \leq
    C\eta^{-(p-1)}.
\end{equation}
\end{lemma}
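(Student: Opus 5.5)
The plan is to rescale \eqref{eq:ODE-inequality} on a dyadic-type time window so that the coefficient $\tau^{1-p}$ becomes essentially constant. The problem then reduces to an autonomous superlinear inequality whose blow-up time I can bound explicitly by $O(\eta^{-(p-1)})$.

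\textbf{Rescaling.} Fix $T\ge\tau_0$, to be chosen later as $T=L\eta^{-(p-1)}$ with $L$ large, and argue by contradiction assuming $\tau_*>2T$. For $s\in[0,T]$ set $w(s):=\mathcal J(T+s)/(\eta T)$.
\begin{itemize}
\item Since $\mathcal J'\ge c_0\eta>0$, $\mathcal J$ is increasing, so $w(s)\ge w(0)\ge c_0$ and $w'\ge0$ by \eqref{eq:ODE-lower-data}.
\item On $[T,2T]$ we have $\tau^{1-p}\ge 2^{1-p}T^{1-p}$. Dividing \eqref{eq:ODE-inequality} by $\eta T$ gives
\[
w''+2w'\ge \delta\,w^p \quad\text{on }(0,T),\qquad \delta:=2^{1-p}C_0\,\eta^{p-1}.
\]
\end{itemize}
The claim then reduces to the following statement: any such $w$ with $w\ge c_0$ and $w'\ge0$ must cease to exist before time $S_\delta\le C'\delta^{-1}$, where $C'$ depends only on $p$ and $c_0$. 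Taking $L$ so large that $T=L\eta^{-(p-1)}>S_\delta$ contradicts $\tau_*>2T$, which yields \eqref{eq:ODE-lifespan}. The restriction $\eta\le\eta_0$ only serves to guarantee $T\ge\tau_0$ and $\delta\le1$.

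\textbf{Doubling scheme.} Multiplying by $e^{2s}$ gives $(e^{2s}w')'\ge\delta e^{2s}w^p$. Suppose $w\ge M$ on $[s_0,\infty)$. Then:
\begin{itemize}
\item Integrating and using $w'(s_0)\ge0$ gives $w'(s)\ge\tfrac{\delta M^p}{2}\bigl(1-e^{-2(s-s_0)}\bigr)$.
\item Over a short window of length $h\le1$ this gives $w'(s)\gtrsim\delta M^p(s-s_0)$, since $1-e^{-2r}\ge cr$ for $r\le1$.
\end{itemize}
I would iterate starting from $M_0=c_0$ and doubling $M_{k+1}=2M_k$, splitting into two regimes according to the size of $\delta M_k^{p-1}$.
\begin{itemize}
\item \emph{Regime $\delta M_k^{p-1}<1$.} Waiting time $1$, after which $w'\ge c\,\delta M_k^p$, and then $C/(\delta M_k^{p-1})$ more time doubles $w$. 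The number of such steps is $O(1+\log(1/\delta))\lesssim\delta^{-1}$. The terms $C/(\delta M_k^{p-1})$ form a geometric series bounded by $C/(\delta c_0^{p-1})$.
\item \emph{Regime $\delta M_k^{p-1}\ge1$.} Use the quadratic short-time bound $w(s_0+h)-w(s_0)\gtrsim\delta M_k^p h^2$ with $h\simeq(\delta M_k^{p-1})^{-1/2}\le1$. This doubles $w$ in time $O\bigl((\delta M_k^{p-1})^{-1/2}\bigr)$, which again sums geometrically to $O(1)$.
\end{itemize}
Summing both regimes, $w$ exceeds every bound within total time $\lesssim\delta^{-1}$. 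This is impossible for a function finite on $[0,T]$ with $T>S_\delta$.

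\textbf{Main obstacle.} I expect the main obstacle to be the transient in which $w'$ is initially only known to be nonnegative while the damping term $2w'$ competes with the forcing. A naive estimate pays a waiting time of order $1$ at every doubling step, and infinitely many such steps would make the total time diverge. The two-regime bookkeeping above, with a short quadratic-growth window once $\delta M^{p-1}\ge1$, is designed precisely to avoid this divergence. I would check that the constants in the two regimes depend only on $p$ and $c_0$. If the bookkeeping proves awkward, the fallback is to reproduce the Zhou--Han argument directly: multiply by $\mathcal J'$, integrate to obtain a first-order inequality $\mathcal J'\gtrsim\tau^{(1-p)/2}\mathcal J^{(p+1)/2}$ for large $\tau$, and integrate that.
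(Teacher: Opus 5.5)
Your proof is correct. It shares the paper's reduction but replaces its core step. Your window $[T,2T]$ with $T=L\eta^{-(p-1)}$ is exactly the paper's window $x\in[h,2h]$ in the variable $x=\eta^{p-1}\tau$: your $L$ is the paper's $h$, and $w(s)=h^{-1}\Lambda(h+\eta^{p-1}s)$. Both arguments also freeze the coefficient via $\tau^{1-p}\ge(2T)^{1-p}$.

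The paper then proves blow-up of the frozen problem by comparison with the explicit singular subsolution $z_h(x)=M_h(2h-x)^{-2/(p-1)}$. This satisfies $\eta^{p-1}z_h''+2z_h'\le b_h z_h^p$ for every $\eta\le1$, and the comparison runs through a first-zero argument with an exponential integrating factor. That route is short and gives the explicit bound $\tau_*\le 2h\eta^{-(p-1)}$. However, it needs strict ordering of both $\Lambda$ and $\Lambda'$ at $x=h$, so the hypothesis $\mathcal J'\ge c_0\eta$ is used quantitatively to dominate $z_h'(h)$.

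Your doubling scheme needs no guessed subsolution and uses only $\mathcal J(T)\ge c_0\eta T$ and $\mathcal J'\ge0$, at the price of more bookkeeping. It does close:
\begin{itemize}
\item the waiting times total $O(1+\log^+(1/\delta))$;
\item the regime-one doubling times form a geometric series of size $O(c_0^{1-p}\delta^{-1})$;
\item the regime-two times total $O(1)$.
\end{itemize}
Hence $S_\delta\lesssim\delta^{-1}$ for $\delta\le1$.

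\textbf{Two small remarks.} First, with explicit constants, $h\simeq(\delta M_k^{p-1})^{-1/2}$ is at most $1$ only when $\delta M_k^{p-1}$ exceeds an absolute constant larger than $1$. So place the regime threshold there, or use $1-e^{-2r}\ge c\min\{r,1\}$.

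Second, discard the fallback: the damping term defeats the energy method. The linear function $\mathcal J=A\eta\tau$ with $A\ge c_0$ satisfies all the hypotheses for $\tau\le 2C_0^{-1}A^{1-p}\eta^{-(p-1)}$. Yet it violates $\mathcal J'\gtrsim\tau^{(1-p)/2}\mathcal J^{(p+1)/2}$ as soon as $\tau\gg\eta^{-(p-1)/2}$, so that first-order inequality is not available on the relevant time scale.
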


\begin{proof}
Set $L:=\eta^{-(p-1)}$, $\delta:=L^{-1}=\eta^{p-1}\leq1$, and define
\[
    \Lambda(x)
    :=
    \frac{\mathcal J(Lx)}{\eta L},
    \qquad
    \frac{\tau_0}{L}\leq x<\frac{\tau_*}{L}.
\]
Since $\mathcal J'(Lx)=\eta\Lambda'(x)$ and $\mathcal J''(Lx)=\frac{\eta}{L}\Lambda''(x)$, inequalities \eqref{eq:ODE-inequality}-\eqref{eq:ODE-lower-data} become
\begin{numcases}{}
  \delta\Lambda''(x)+2\Lambda'(x)\geq C_0x^{1-p}\Lambda(x)^p, & a.e.\ $x\in\left(\frac{\tau_0}{L},\frac{\tau_*}{L}\right)$ \label{eq:scaled-ODE} \\
  \Lambda(x)\geq c_0x,\; \Lambda'(x)\geq c_0, & for all\ $x\in\left[\frac{\tau_0}{L},\frac{\tau_*}{L}\right)$ \label{eq:scaled-lower}
\end{numcases}

We next construct a singular subsolution. Fix $h\geq2$, to be chosen
below, and set
\[
    q:=\frac{2}{p-1},
    \qquad
    b_h:=C_0(2h)^{1-p},
\]
and
\[
    M_h
    :=
    \left(
        \frac{q(q+1)+2qh}{b_h}
    \right)^{1/(p-1)}.
\]
Define
\[
    z_h(x):=M_h(2h-x)^{-q},
    \qquad h\leq x<2h.
\]
Since $pq=q+2$, we have
\begin{align*}
    \delta z_h''(x)+2z_h'(x)
    &=
    M_h(2h-x)^{-q-2}
    \left[
        \delta q(q+1)+2q(2h-x)
    \right]
    \\
    &\leq
    M_h(2h-x)^{-q-2}
    \left[
        q(q+1)+2qh
    \right]
    \\
    &=
    b_hz_h(x)^p.
\end{align*}
Thus $z_h$ is a subsolution independently of $0<\delta\leq1$.

As $h\to\infty$, note that
\begin{equation*}
    \frac{z_h(h)}{h}+z_h'(h)
    =
    (1+q)M_hh^{-q-1}
    =
    \left(
        \frac{q(q+1)+2qh}{C_02^{1-p}h^2}
    \right)^{1/(p-1)}\to 0.
\end{equation*}
Therefore, we may fix $h\geq2$, depending only on $p,C_0$, and
$c_0$, sufficiently large that
\begin{equation}\label{eq:subsolution-initial}
    z_h(h)<c_0h,
    \qquad
    z_h'(h)<c_0.
\end{equation}

For this fixed $h$, let
\[
    \eta_0
    :=
    \min\left\{
        1,
        \left(\frac{h}{\tau_0}\right)^{1/(p-1)}
    \right\}.
\]
If $0<\eta\leq\eta_0$, then
\[
    \frac{\tau_0}{L}
    =
    \tau_0\eta^{p-1}
    \leq h.
\]
If $\tau_*/L\leq h$, then $\tau_* \leq hL = h\eta^{-(p-1)}$, and the conclusion follows. 
\\
Assume therefore that $\tau_*/L>h$. By \eqref{eq:scaled-lower} and \eqref{eq:subsolution-initial}, we have $\Lambda(h)>z_h(h)$ and $\Lambda'(h)>z_h'(h)$.

We claim that
\begin{equation}\label{eq:Lambda-z-comparison}
    \Lambda(x)\geq z_h(x)
    \qquad
    \text{for }
    h\leq x<
    \min\left\{
        2h,\frac{\tau_*}{L}
    \right\}.
\end{equation}
Set    $X:=
    \min\left\{
        2h,\frac{\tau_*}{L}
    \right\}$ and
    $w:=\Lambda-z_h$. By the choice of $h$, we have
\[
    w(h)>0
    \qquad\text{and}\qquad
    w'(h)>0.
\]
Suppose, by contradiction, that $w$ vanishes somewhere in $(h,X)$,
and let $x_0\in(h,X)$ be its first zero. Then $w(x)>0$ for $h\leq x<x_0$ and $w(x_0)=0$. In particular, $\Lambda(x)\geq z_h(x)>0$ on $[h,x_0]$. Since
$r\mapsto r^p$ is increasing on $[0,\infty)$ and $(h,X)\subset \left(\frac{\tau_0}{L},\frac{\tau_*}{L}\right)$, we obtain
\begin{align*}
    \delta w''(x)+2w'(x) &\geq C_0x^{1-p}\Lambda(x)^p - b_hz_h(x)^p\\
    &\geq
    b_h\bigl(\Lambda(x)^p-z_h(x)^p\bigr)\\
    &\geq0
\end{align*}
for almost every $x\in(h,x_0)$, where we used that $C_0x^{1-p}\geq b_h$ since $x\le 2h$. Therefore,
\[
    \left(e^{2x/\delta}w'(x)\right)'
    =
    \frac1\delta e^{2x/\delta}
    \bigl(\delta w''(x)+2w'(x)\bigr)
    \geq0
\]
for almost every $x\in(h,x_0)$. Integrating from $h$ to $x<x_0$
gives
\[
    e^{2x/\delta}w'(x)
    \geq
    e^{2h/\delta}w'(h)>0,
\]
and hence $w'(x)>0$ on $[h,x_0)$. Thus $w$ is strictly increasing
there, so $w(x_0)\geq w(h)>0$, contradicting the definition of $x_0$. Consequently, $w$ cannot vanish
in $(h,X)$, and \eqref{eq:Lambda-z-comparison} follows.

Finally, from the definition of $z_h$ we have that $z_h(x)\longrightarrow+\infty$ as $x\to2h^-$.

Suppose that $\tau_*/L>2h$. Since $\Lambda\in W_{\loc}^{2,1}  \left( \left[\frac{\tau_0}{L},\frac{\tau_*}{L}\right) \right)$,
and $2h$ is an interior point of this interval, $\Lambda$ is continuous
and bounded in a neighborhood of $2h$. On the other hand,
\eqref{eq:Lambda-z-comparison} gives
\[
    \Lambda(x)\geq z_h(x)\longrightarrow+\infty
    \qquad\text{as }x\to2h^-,
\]
which is a contradiction. Therefore, $\frac{\tau_*}{L}\leq2h$ and hence
\[
    \tau_*
    \leq
    2hL
    =
    2h\eta^{-(p-1)}.
\]
This proves \eqref{eq:ODE-lifespan}.
\end{proof}

\section{Proof of the main theorem}
\label{sec:proof-main}

The \Cref{sec:critical-inequality} reduces the PDE argument to two estimates for the
functional $J$: the critical differential inequality in
\Cref{prop:critical-differential}, valid throughout the lifespan, and the
lower bounds in \Cref{lem:KJ-lower}, valid after a fixed time independent
of $\eps$. A logarithmic change of variables transforms these estimates
into the hypotheses of \Cref{lem:critical-ODE}, from which both finite-time
blow-up and the critical exponential lifespan bound follow.

\begin{proof}[Proof of \Cref{thm:main}]
Let $p=p_*$, and let $u$ be a weak solution with finite speed of
propagation of \eqref{eq:main-p} on its maximal existence interval
$[0,T_\eps)$.

By \Cref{prop:critical-differential}, there exists $C_0>0$, independent
of $\eps$ and $T_\eps$, such that
\begin{equation}\label{eq:J-differential-final}
    (2+t)^2J''(t)
    +
    3(2+t)J'(t)
    \geq
    C_0\bigl(\log(2+t)\bigr)^{1-p_*}J(t)^{p_*}, \qquad \forall\, t\in(0,T_\eps).
\end{equation}
 Moreover, by
\Cref{lem:KJ-lower}, there exist $c_0>0$ and $t_2\geq1$, independent
of $\eps$ and $T_\eps$, such that
\begin{equation}\label{eq:J-lower-final}
    J(t)\geq c_0\eps^{p_*}\log(2+t),
    \qquad
    (2+t)J'(t)\geq c_0\eps^{p_*}, \qquad \forall\, t\in[t_2,T_\eps),
\end{equation}
if $t_2<T_\eps$. In the case where $t_2\geq T_\eps$, we trivially obtain the lifespan estimate \eqref{eq:lifespan-exp} by noting that, for any $0<\eps\le \eps_0:=1$, we have
\[T_\eps<t_2<\exp\left(\log(1+t_2)\right)\le \exp\left(\log(1+t_2)\eps^{-p_*(p_*-1)}\right)\]
and the estimate holds for constant $C=\log(1+t_2)$, which is independent of $\eps$.

Thus, for $t_2<T_\eps$, define the change of variable $\tau = \log(2+t)$, $t\in[t_2,T_\eps)$ and set $\tau_0:= \log(2+t_2)$ and $\tau_\eps=\log(2+T_\eps)$, where $\tau_\eps=+\infty$ if $T_\eps=+\infty$. Finally, define the functional 
\[
    \mathcal J(\tau):=J(e^\tau-2),
    \qquad
    \tau\in[\tau_0,\tau_\eps),
\]
where $\tau_\eps=+\infty$ if $T_\eps=+\infty$. Since
$J\in C^2([0,T_\eps))$, we have $\mathcal J\in
    W_{\loc}^{2,1}([\tau_0,\tau_\eps))$.
    
Writing $t=e^\tau-2$, we compute
\[
    \mathcal J'(\tau)
    =
    (2+t)J'(t)
\]
and
\[
    \mathcal J''(\tau)
    =
    (2+t)^2J''(t)+(2+t)J'(t).
\]
Therefore,
\[
    \mathcal J''(\tau)+2\mathcal J'(\tau)
    =
    (2+t)^2J''(t)+3(2+t)J'(t).
\]
Since $\tau=\log(2+t)$, the differential inequality
\eqref{eq:J-differential-final} and the inequalities \eqref{eq:J-lower-final} become
\begin{numcases}{}
  \mathcal J''(\tau)+2\mathcal J'(\tau)
    \geq
    C_0\tau^{1-p_*}\mathcal J(\tau)^{p_*}, & a.e.\ $\tau\in(\tau_0,\tau_\eps)$ \label{eq:mathcalJ-ODE-final} \\
  \mathcal J(\tau)\geq c_0\eps^{p_*}\tau,\; \mathcal J'(\tau)\geq c_0\eps^{p_*}, & for all\ $\tau\in[\tau_0,\tau_\eps)$ \label{eq:mathcalJ-lower-final}
\end{numcases}

We first prove finite-time blow-up for arbitrary $\eps>0$.
Let $\eta_0\in(0,1]$ and set $\eta:=\min\{\eps^{p_*},\eta_0\}$. Hence, by \eqref{eq:mathcalJ-lower-final},
\[
    \mathcal J(\tau)\geq c_0\eta\tau,
    \qquad
    \mathcal J'(\tau)\geq c_0\eta
\]
for every $\tau\in[\tau_0,\tau_\eps)$.
Together with \eqref{eq:mathcalJ-ODE-final}, these estimates show that all the hypotheses of \Cref{lem:critical-ODE} are satisfied with $p=p_*$ and $\tau_*=\tau_\eps$. Therefore, there exist $C>0$ independent of $\eta$ such that
\[
    \tau_\eps
    \leq
    C\eta^{-(p_*-1)}
    <
    +\infty.
\]
Consequently, $T_\eps=e^{\tau_\eps}-2<+\infty$. Thus every solution under consideration has finite maximal existence time for every $\eps>0$.

It remains to prove the quantitative lifespan estimate. Define $\eps_0:=\eta_0^{1/p_*}\in(0,1]$, and let $0<\eps\leq\eps_0$.

Defining $\eta:=\eps^{p_*}$ then $0<\eta\leq\eta_0$ and the estimates \eqref{eq:mathcalJ-ODE-final} and
\eqref{eq:mathcalJ-lower-final} satisfy the hypotheses of
\Cref{lem:critical-ODE}. Consequently,
\[
    \log(2+T_\eps)
    =
    \tau_\eps
    \leq
    C\eta^{-(p_*-1)}
    \leq
    C\bigl(\eps^{p_*}\bigr)^{-(p_*-1)}
    =
    C\eps^{-p_*(p_*-1)}.
\]
Finally, it follows that
\[
    T_\eps
    \leq
    \exp\!\left(
        C\eps^{-p_*(p_*-1)}
    \right),
\]
which proves \eqref{eq:lifespan-exp}.
\end{proof}

\newpage

\bibliographystyle{amsplain}
\bibliography{ref}

@article{John1979,
  author  = {John, Fritz},
  title   = {Blow-up of solutions of nonlinear wave equations in three space
             dimensions},
  journal = {Manuscripta Math.},
  volume  = {28},
  number  = {1--3},
  pages   = {235--268},
  year    = {1979},
  doi     = {10.1007/BF01647974}
}

@article{Glassey1981a,
  author  = {Glassey, Robert T.},
  title   = {Finite-time blow-up for solutions of nonlinear wave equations},
  journal = {Math. Z.},
  volume  = {177},
  number  = {3},
  pages   = {323--340},
  year    = {1981},
  doi     = {10.1007/BF01162066}
}

@article{Sideris1984,
  author  = {Sideris, Thomas C.},
  title   = {Nonexistence of global solutions to semilinear wave equations in
             high dimensions},
  journal = {J. Differential Equations},
  volume  = {52},
  number  = {3},
  pages   = {378--406},
  year    = {1984},
  doi     = {10.1016/0022-0396(84)90169-4}
}

@article{Schaeffer1985,
  author  = {Schaeffer, Jack},
  title   = {The equation {$u_{tt}-\Delta u=|u|^p$} for the critical value of {$p$}},
  journal = {Proc. Roy. Soc. Edinburgh Sect. A},
  volume  = {101},
  number  = {1--2},
  pages   = {31--44},
  year    = {1985},
  doi     = {10.1017/S0308210500026135}
}

@article{GeorgievLindbladSogge1997,
  author  = {Georgiev, Vladimir and Lindblad, Hans and Sogge, Christopher D.},
  title   = {Weighted {S}trichartz estimates and global existence for
             semilinear wave equations},
  journal = {Amer. J. Math.},
  volume  = {119},
  number  = {6},
  pages   = {1291--1319},
  year    = {1997},
  doi     = {10.1353/ajm.1997.0038}
}

@article{YordanovZhang2006,
  author  = {Yordanov, Borislav T. and Zhang, Qi S.},
  title   = {Finite time blow up for critical wave equations in high dimensions},
  journal = {J. Funct. Anal.},
  volume  = {231},
  number  = {2},
  pages   = {361--374},
  year    = {2006},
  doi     = {10.1016/j.jfa.2005.03.012}
}

@article{Zhou1992,
  author  = {Zhou, Yi},
  title   = {Blow up of classical solutions to {$\Box u=|u|^{1+\alpha}$} in
             three space dimensions},
  journal = {J. Partial Differential Equations},
  volume  = {5},
  number  = {3},
  pages   = {21--32},
  year    = {1992}
}

@article{Zhou2007,
  author  = {Zhou, Yi},
  title   = {Blow up of solutions to semilinear wave equations with critical
             exponent in high dimensions},
  journal = {Chin. Ann. Math. Ser. B},
  volume  = {28},
  number  = {2},
  pages   = {205--212},
  year    = {2007},
  doi     = {10.1007/s11401-005-0205-x}
}

@article{TakamuraWakasa2011,
  author  = {Takamura, Hiroyuki and Wakasa, Kyouhei},
  title   = {The sharp upper bound of the lifespan of solutions to critical
             semilinear wave equations in high dimensions},
  journal = {J. Differential Equations},
  volume  = {251},
  number  = {4--5},
  pages   = {1157--1171},
  year    = {2011},
  doi     = {10.1016/j.jde.2011.03.024}
}

@article{ZhouHan2014,
  author  = {Zhou, Yi and Han, Wei},
  title   = {Life-span of solutions to critical semilinear wave equations},
  journal = {Comm. Partial Differential Equations},
  volume  = {39},
  number  = {3},
  pages   = {439--451},
  year    = {2014},
  doi     = {10.1080/03605302.2013.863914}
}

@article{Yagdjian2004,
  author  = {Yagdjian, Karen},
  title   = {A note on the fundamental solution for the {T}ricomi-type equation
             in the hyperbolic domain},
  journal = {J. Differential Equations},
  volume  = {206},
  number  = {1},
  pages   = {227--252},
  year    = {2004},
  doi     = {10.1016/j.jde.2004.07.028}
}

@article{Yagdjian2006,
  author  = {Yagdjian, Karen},
  title   = {Global existence for the {$n$}-dimensional semilinear
             {T}ricomi-type equations},
  journal = {Comm. Partial Differential Equations},
  volume  = {31},
  number  = {4--6},
  pages   = {907--944},
  year    = {2006},
  doi     = {10.1080/03605300500361511}
}

@article{Yagdjian2007a,
  author  = {Yagdjian, Karen},
  title   = {The self-similar solutions of the one-dimensional semilinear
             {T}ricomi-type equations},
  journal = {J. Differential Equations},
  volume  = {236},
  number  = {1},
  pages   = {82--115},
  year    = {2007},
  doi     = {10.1016/j.jde.2007.01.017}
}

@article{Yagdjian2007b,
  author  = {Yagdjian, Karen},
  title   = {Self-similar solutions of semilinear wave equation with variable
             speed of propagation},
  journal = {J. Math. Anal. Appl.},
  volume  = {336},
  number  = {2},
  pages   = {1259--1286},
  year    = {2007},
  doi     = {10.1016/j.jmaa.2007.03.065}
}

@article{HeWittYin2017a,
  author  = {He, Daoyin and Witt, Ingo and Yin, Huicheng},
  title   = {On the global solution problem for semilinear generalized
             {T}ricomi equations, {I}},
  journal = {Calc. Var. Partial Differential Equations},
  volume  = {56},
  number  = {2},
  pages   = {Paper No. 21},
  year    = {2017},
  doi     = {10.1007/s00526-017-1125-9}
}

@article{HeWittYin2017,
  author  = {He, Daoyin and Witt, Ingo and Yin, Huicheng},
  title   = {On semilinear {T}ricomi equations with critical exponents or in
             two space dimensions},
  journal = {J. Differential Equations},
  volume  = {263},
  number  = {12},
  pages   = {8102--8137},
  year    = {2017},
  doi     = {10.1016/j.jde.2017.08.033}
}

@article{HeWittYin2020,
  author  = {He, Daoyin and Witt, Ingo and Yin, Huicheng},
  title   = {On the {S}trauss index of semilinear {T}ricomi equation},
  journal = {Commun. Pure Appl. Anal.},
  volume  = {19},
  number  = {10},
  pages   = {4817--4838},
  year    = {2020},
  doi     = {10.3934/cpaa.2020213}
}

@misc{LinTu2019,
  author       = {Lin, Jiayun and Tu, Ziheng},
  title        = {Lifespan of semilinear generalized {T}ricomi equation with
                  {S}trauss type exponent},
  year         = {2019},
  eprint       = {1903.11351},
  archivePrefix= {arXiv},
  primaryClass = {math.AP},
  note         = {Preprint}
}

@article{Palmieri2025,
  author  = {Palmieri, Alessandro},
  title   = {On the critical exponent for the semilinear
             {E}uler--{P}oisson--{D}arboux--{T}ricomi equation with power
             nonlinearity},
  journal = {J. Differential Equations},
  volume  = {437},
  pages   = {Paper No. 113309},
  year    = {2025},
  doi     = {10.1016/j.jde.2025.113309}
}

@article{LaiPalmieriTakamura2026,
  author  = {Lai, Ning-An and Palmieri, Alessandro and Takamura, Hiroyuki},
  title   = {A blow-up result for the semilinear
             {E}uler--{P}oisson--{D}arboux--{T}ricomi equation with critical
             power nonlinearity},
  journal = {J. Math. Anal. Appl.},
  volume  = {553},
  number  = {1},
  pages   = {Paper No. 129835},
  year    = {2026},
  doi     = {10.1016/j.jmaa.2025.129835}
}

@article{DLR2015,
  author  = {D'Abbicco, Marcello and Lucente, Sandra and Reissig, Michael},
  title   = {A shift in the {S}trauss exponent for semilinear wave equations
             with a not effective damping},
  journal = {J. Differential Equations},
  volume  = {259},
  number  = {10},
  pages   = {5040--5073},
  year    = {2015},
  doi     = {10.1016/j.jde.2015.06.018}
}

@article{DAbbicco2015,
  author  = {D'Abbicco, Marcello},
  title   = {The threshold of effective damping for semilinear wave equations},
  journal = {Math. Methods Appl. Sci.},
  volume  = {38},
  number  = {6},
  pages   = {1032--1045},
  year    = {2015},
  doi     = {10.1002/mma.3126}
}

@article{NascimentoPalmieriReissig2017,
  author  = {Nunes do Nascimento, Wanderley and Palmieri, Alessandro and
             Reissig, Michael},
  title   = {Semi-linear wave models with power non-linearity and
             scale-invariant time-dependent mass and dissipation},
  journal = {Math. Nachr.},
  volume  = {290},
  number  = {11--12},
  pages   = {1779--1805},
  year    = {2017},
  doi     = {10.1002/mana.201600069}
}

@article{PalmieriReissig2018,
  author  = {Palmieri, Alessandro and Reissig, Michael},
  title   = {Semi-linear wave models with power non-linearity and
             scale-invariant time-dependent mass and dissipation, {II}},
  journal = {Math. Nachr.},
  volume  = {291},
  number  = {11--12},
  pages   = {1859--1892},
  year    = {2018},
  doi     = {10.1002/mana.201700144}
}

@article{PalmieriReissig2019,
  author  = {Palmieri, Alessandro and Reissig, Michael},
  title   = {A competition between {F}ujita and {S}trauss type exponents for
             blow-up of semi-linear wave equations with scale-invariant damping
             and mass},
  journal = {J. Differential Equations},
  volume  = {266},
  number  = {2--3},
  pages   = {1176--1220},
  year    = {2019},
  doi     = {10.1016/j.jde.2018.07.061}
}

@article{PalmieriTu2019,
  author  = {Palmieri, Alessandro and Tu, Ziheng},
  title   = {Lifespan of semilinear wave equation with scale invariant
             dissipation and mass and sub-{S}trauss power nonlinearity},
  journal = {J. Math. Anal. Appl.},
  volume  = {470},
  number  = {1},
  pages   = {447--469},
  year    = {2019},
  doi     = {10.1016/j.jmaa.2018.10.015}
}

@article{IkedaSobajima2018,
  author  = {Ikeda, Masahiro and Sobajima, Motohiro},
  title   = {Life-span of solutions to semilinear wave equation with
             time-dependent critical damping for specially localized initial
             data},
  journal = {Math. Ann.},
  volume  = {372},
  number  = {3--4},
  pages   = {1017--1040},
  year    = {2018},
  doi     = {10.1007/s00208-018-1664-1}
}

@article{IkedaSobajimaWakasa2019,
  author  = {Ikeda, Masahiro and Sobajima, Motohiro and Wakasa, Kyouhei},
  title   = {Blow-up phenomena of semilinear wave equations and their weakly
             coupled systems},
  journal = {J. Differential Equations},
  volume  = {267},
  number  = {9},
  pages   = {5165--5201},
  year    = {2019},
  doi     = {10.1016/j.jde.2019.05.029}
}

@misc{Hamza2026,
  author       = {Hamza, Mohamed Ali},
  title        = {On the blow-up of solutions to scale-invariant wave equations
                  with damping and mass: beyond the positive discriminant
                  restriction},
  year         = {2026},
  eprint       = {2604.06478},
  archivePrefix= {arXiv},
  primaryClass = {math.AP},
  note         = {Preprint}
}

@article{MarconNascimentoSantos2026,
  author  = {Marcon, Diego and Nunes do Nascimento, Wanderley and
             Santos, Matheus C.},
  title   = {Blow-up for a semilinear {T}ricomi-type equation with
             scale-invariant mass in the oscillatory regime},
  journal = {Math. Ann.},
  volume  = {396},
  number  = {40},
  year    = {2026},
  doi     = {10.1007/s00208-026-03579-2}
}

@book{NIST,
  editor    = {Olver, Frank W. J. and Lozier, Daniel W. and
               Boisvert, Ronald F. and Clark, Charles W.},
  title     = {{NIST} Handbook of Mathematical Functions},
  publisher = {Cambridge University Press},
  address   = {Cambridge},
  year      = {2010}
}

@article{TodorovaYordanov2001,
  author  = {Todorova, Grozdena and Yordanov, Borislav},
  title   = {Critical exponent for a nonlinear wave equation with damping},
  journal = {J. Differential Equations},
  volume  = {174},
  number  = {2},
  pages   = {464--489},
  year    = {2001},
  doi     = {10.1006/jdeq.2000.3933}
}

@article{Zhang2001,
  author  = {Zhang, Qi S.},
  title   = {A blow-up result for a nonlinear wave equation with damping: the
             critical case},
  journal = {C. R. Acad. Sci. Paris S\'er. I Math.},
  volume  = {333},
  number  = {2},
  pages   = {109--114},
  year    = {2001},
  doi     = {10.1016/S0764-4442(01)01999-1}
}

@misc{HeWittYin2016II,
  author       = {He, Daoyin and Witt, Ingo and Yin, Huicheng},
  title        = {On the global solution problem of semilinear generalized
                  {T}ricomi equations, {II}},
  year         = {2016},
  eprint       = {1611.07606},
  archivePrefix= {arXiv},
  primaryClass = {math.AP},
  note         = {Preprint}
}

@misc{HeWittYin2018,
  author       = {He, Daoyin and Witt, Ingo and Yin, Huicheng},
  title        = {On semilinear {T}ricomi equations in one space dimension},
  year         = {2018},
  eprint       = {1810.12748},
  archivePrefix= {arXiv},
  primaryClass = {math.AP},
  note         = {Preprint}
}

@misc{LiGuo2025,
  author       = {Li, Yuequn and Guo, Fei},
  title        = {Critical curve for weakly coupled system of semilinear
                  {E}uler--{P}oisson--{D}arboux--{T}ricomi equations},
  year         = {2025},
  eprint       = {2511.08084},
  archivePrefix= {arXiv},
  primaryClass = {math.AP},
  note         = {Preprint}
}

@incollection{PalmieriOdd2019,
  author    = {Palmieri, Alessandro},
  title     = {Global existence results for a semilinear wave equation with
               scale-invariant damping and mass in odd space dimension},
  booktitle = {New Tools for Nonlinear {PDE}s and Application},
  series    = {Trends in Mathematics},
  publisher = {Birkh\"auser/Springer},
  address   = {Cham},
  pages     = {305--369},
  year      = {2019},
  doi       = {10.1007/978-3-030-10937-0_12}
}

@article{PalmieriEven2019,
  author  = {Palmieri, Alessandro},
  title   = {A global existence result for a semilinear scale-invariant wave
             equation in even dimension},
  journal = {Math. Methods Appl. Sci.},
  volume  = {42},
  number  = {8},
  pages   = {2680--2706},
  year    = {2019},
  doi     = {10.1002/mma.5542}
}

@article{LiGuo2026,
author = {Yuequn Li and Fei Guo},
title = {Lifespan estimate for the semilinear regular Euler–Poisson–Darboux–Tricomi equation},
journal = {Applicable Analysis},
volume = {0},
number = {0},
pages = {1--22},
year = {2026},
publisher = {Taylor \& Francis},
doi = {10.1080/00036811.2026.2673374},
URL = {https://doi.org/10.1080/00036811.2026.2673374},
eprint = {https://doi.org/10.1080/00036811.2026.2673374}
}

@article{DAbbiccoPalmieri2021,
  author  = {D'Abbicco, Marcello and Palmieri, Alessandro},
  title   = {A Note on {$L^{p}$--$L^{q}$} Estimates for Semilinear
             Critical Dissipative {Klein--Gordon} Equations},
  journal = {Journal of Dynamics and Differential Equations},
  volume  = {33},
  number  = {1},
  pages   = {63--74},
  year    = {2021},
  doi     = {10.1007/s10884-019-09818-2}
}

@misc{ChenHamza2026,
  author        = {Chen, Wenhui and Hamza, Mohamed Ali},
  title         = {Blow-up criteria and lifespan estimates for semilinear
                   wave equations with time-dependent damping and mass},
  year          = {2026},
  eprint        = {2608.03713},
  archivePrefix = {arXiv},
  primaryClass  = {math.AP}
}

\end{document}